\journal{Stochastic Processes and their Applications}
\newtheorem{theorem}{Theorem}[section]
\newtheorem{lemma}[theorem]{Lemma}
\newtheorem{proposition}[theorem]{Proposition}
\newdefinition{remark}[theorem]{Remark}
\newproof{proof}{Proof}
\newproof{proofof}{Proof of Theorem~\ref{thm:MA-clt}}
\newcommand{\argmrk}{\,\cdot\,}
\newcommand{\HS}{\textup{HS}}
\DeclareMathOperator{\cov}{Cov}
\DeclareMathOperator{\dom}{dom}
\DeclareMathOperator{\tr}{Tr}
\newcommand{\1}{\mathds{1}}
\newcommand{\bbN}{\mathbb{N}}
\newcommand{\bbP}{\mathbb{P}}
\newcommand{\bbR}{\mathbb{R}}
\newcommand{\bbZ}{\mathbb{Z}}
\newcommand{\cC}{\mathcal{C}}
\newcommand{\cF}{\mathcal{F}}
\newcommand{\cH}{\mathcal{H}}
\newcommand{\cL}{\mathcal{L}}
\newcommand{\cN}{\mathcal{N}}
\newcommand{\cS}{\mathcal{S}}
\newcommand{\com}{^{\mathsf{c}}}
\DeclarePairedDelimiterXPP\abs[1]{\mathop{}}\lvert\rvert{}{#1}%
\DeclarePairedDelimiterXPP\norm[1]{\mathop{}}{\lVert}{\rVert}{}{%
  \ifblank{#1}{\argmrk}{#1}%
}%
\DeclarePairedDelimiterXPP\iprod[1]{\mathop{}}\langle\rangle{}{%
  \ifblank{#1}{\argmrk, \argmrk}{#1}%
}%
\DeclarePairedDelimiter{\Set}{\{}{\}}%
\DeclarePairedDelimiterXPP\Expt[1]{\mathbb{E}}{[}{]}{}{#1}%
\DeclarePairedDelimiterX{\fp}[1]{\{}{\}}{
  \ifblank{#1}{\,\cdots\,}{#1}%
}
\DeclarePairedDelimiterX{\ip}[1]{[}{]}{%
  \ifblank{#1}{\,\cdots\,}{#1}%
}
\DeclarePairedDelimiterX{\floor}[1]{\lfloor}{\rfloor}{%
  \ifblank{#1}{\,\cdots\,}{#1}%
}
\DeclarePairedDelimiterX{\ceil}[1]{\lceil}{\rceil}{%
  \ifblank{#1}{\,\cdots\,}{#1}%
}
\DeclareMathOperator{\sgn}{sgn}
\newcommand{\isp}{\,}
\newcommand{\di}{\textup{d}}
\newcommand{\Di}{\isp\di}
\begin{document}
\fontsize{9pt}{12pt}\selectfont
\begin{frontmatter}

  \title{Multi-Dimensional Normal Approximation\\ of Heavy-Tailed Moving
    Averages}%

  \author[author1]{Ehsan Azmoodeh}%
  \ead{ehsan.azmoodeh@liverpool.ac.uk}%
  \address[author1]{Department of Mathematical Sciences, University of
    Liverpool, Mathematical Sciences Building, Liverpool, L69 7ZL,
    United Kingdom}

  \author[author2]{Mathias Mørck
    Ljungdahl\corref{mycorrespondingauthor}}%
  \ead{ljungdahl@math.au.dk}%
  \address[author2]{Department of Mathematics, Aarhus University, Ny
    Munkegade 118, DK-8000 Aarhus C, Denmark}%
  \cortext[mycorrespondingauthor]{Corresponding author}

  \author[author3]{Christoph Thäle}%
  \ead{christoph.thaele@rub.de}%
  \address[author3]{Faculty of Mathematics, Ruhr University Bochum,
    Universitätsstraße 150, 44801 Bochum, Germany}


  \begin{abstract}
    In this paper we extend the refined second-order Poincaré
    inequality for Poisson functionals from a one-dimensional to a
    multi-dimensional setting. Its proof is based on a multivariate
    version of the Malliavin--Stein method for normal approximation on
    Poisson spaces. We also present an application to partial sums of
    vector-valued functionals of heavy-tailed moving averages. The
    extension allows a functional with multivariate arguments,
    i.e. multiple moving averages and also multivariate values of the
    functional. Such a set-up has previously not been explored in the
    framework of stable moving average processes. It can potentially
    capture probabilistic properties which cannot be described solely
    by the one-dimensional marginals, but instead require the joint
    distribution.
  \end{abstract}

  \begin{keyword}
    Central limit theorem\sep heavy-tailed moving average\sep Lévy
    process\sep Malliavin--Stein method\sep Poisson random measure\sep
    second-order Poincaré inequality

    \MSC[2010] 60F05\sep 60G10\sep 60G15\sep 60G52\sep 60G55\sep 60H07
  \end{keyword}
\end{frontmatter}


\section{Introduction}

\noindent In recent decades the combination of Malliavin calculus and
Stein's method for normal approximation has led to a plethora of
Gaussian limit theorems in fields ranging from stochastic geometry,
over cosmology to statistics. Classically, the assumptions require
third or fourth moment conditions which makes the Malliavin--Stein
method unsuitable for distributions with heavier tails. However, in
\cite{BassBerr} a careful differentiation between small and large
values has led to a refined so-called second-order Poincaré inequality
for Poisson functionals, which allows to circumvent these difficulties
to a certain extent. Based on the approach in \cite{PeccMult} the
principal goal of this paper is to obtain a multivariate extension of
the central results of \cite{BassBerr}. This opens the possibility to
capture properties of the underlying process not accessible solely by
the one-dimensional marginal distributions. As a side-result we also
generalize the weak convergence result from
\cite[Theorem~1.1]{BassBerr} to a non-casual setting and due to the
choice of metric for probability laws we additionally remove the
non-trivial requirement of a non-zero variance of the Gaussian limit.

We shall now define the heavy-tailed moving average model to which we
are going to apply our general multivariate central limit theorem. Let
$L = (L_t)_{t \in \bbR}$ be a two-sided Lévy process with no Gaussian
component and Lévy measure $\nu$. We assume that the latter admits a
Lebesgue density $w : \bbR \to \bbR$ such that
\begin{equation}
  \label{eq:stable-levy-measure}
  \abs{w(x)} \leq C \abs{x}^{- 1 - \beta}
\end{equation}
for all $x \neq 0$, some $\beta \in (0, 2)$ and a constant $C >
0$. Hence, the distribution of $L_1$ exhibits $\beta$-stable tails. Consider then for each
$i \in \Set{1, \ldots, m}$, $m \in \bbN$, the process
\begin{equation}
  \label{eq:MA}
  X_t^i \coloneqq \int_{\bbR} g_i(t - s) \Di L_s, \qquad t \in \bbR,
\end{equation}
for some measurable function $g_i : \bbR \to \bbR$. Necessary and
sufficient conditions for the integral to exists are given in
\cite{RajpSpec} and if $L$ is symmetric around zero, i.e. if $-L_1$
and $L_1$ are identically distributed, then we mention that a
sufficient condition is
$\int_{\bbR} \abs{g_i(s)}^{\beta} \Di s < \infty$.

The main examples of kernels $g_i$ we consider satisfy a power-law
behaviour around zero and at infinity. Henceforth we shall assume for
all $i \in \Set{1, \ldots, m}$ the existence of a constant $K > 0$
together with exponents $\alpha_i > 0$ and $\kappa_i \in \bbR$ such
that
\begin{equation}
  \label{eq:kernel-ass}
  \abs{g_i(x)} \leq K \bigl(\abs{x}^{\kappa_i} \1_{[0, a_i)}(\abs{x})
  + \abs{x}^{-\alpha_i} \1_{[a_i, \infty)}(\abs{x}) \bigr) 
\end{equation}
for all $x \in \bbR$, where $a_i>0$ are suitable splitting points,
which may alter the constant $K$. Without~loss of generality we choose
$a_i = 1$ for all $i \in \Set{1, \ldots, m}$ and let $K$ stand for the
corresponding constant. Note in particular that we do not assume that
$X^i$ at \eqref{eq:MA} is a casual moving average as is assumed in
\cite[Theorem~1.1, Equation~(1.6)]{BassBerr}.

The main objects of interest in this paper are rescaled and centred
partial sums of multi-dimensional functionals of the joint
distribution $X_s = (X_s^1, \ldots, X_s^m)$, namely
\begin{equation}
  \label{eq:main-statistic}
  V_n(X; f) = \frac{1}{\sqrt{n}} \sum_{s = 1}^{n} (f(X_s^1, \ldots, X_s^m) - \Expt{f(X_0^1, \ldots, X_0^m)}),
  \quad n \in \bbN,
\end{equation}
where $f : \bbR^m \to \bbR^d$ is a suitable Borel-measurable function,
with $d$ being some positive integer. Observe that $V_n(X; f)$ is a
$d$-dimensional random vector and for convenience we shall denote by
$V_n^i(X; f)$ its $i$th coordinate. We remark that in the one
dimensional case $d = m = 1$ the distributional convergence of
$V_n(X; f)$, as $n \to \infty$, is studied for general functions $f$
in \cite{BassOnLi} and here the so-called Appell rank of $f$ is seen
to play an important role. The results in that paper also imply that
one cannot in general expect convergence in distribution after
rescaling with the factor $\sqrt{n}$ as in \eqref{eq:main-statistic}
or a Gaussian limiting distribution if the memory of the processes are
too long, i.e. if the $\alpha_i$ are too close to $0$. We shall see
that if the tails are not too heavy and the memory is not too long,
which in our case means that $\alpha_i \beta > 2$, we do in fact have
convergence in distribution of $V_n(X; f)$ to a Gaussian random
variable and we shall discuss the \emph{speed} of this convergence by
considering an appropriate metric on the space of probability laws on
$\bbR^d$, see Section~\ref{sec:limit-theory} below. To~conclude such a
result, we could also in principle rely on a multivariate second-order
Poincaré inequality for random vectors of Poisson functionals in
\cite{SchuMult}. But as already observed in the one-dimensional case,
the existing bounds are not suitable for the application to Lévy
driven moving averages just described. In fact, in this specific
situation the bounds in \cite{SchuMult} do not even tend to zero, as
$n$ increases. Against this background, we will develop in this paper
a refined multivariate second-order Poincaré inequality for general
random vectors of Poisson functionals, which is more adapted to our
situation and allows us to distinguish carefully between small and
large values. We believe that this result is of independent interest
as well. This eventually paves the way to the central limit theory for
the random vectors $V_n(X; f)$.

One possible motivation for the extension of the theory from
\cite{BassBerr} to a multivariate set-up is the fact that important
properties of random processes, such as self-similarity, are
determined by the finite dimensional distributions of $X$ and not
solely by the one-dimensional marginals. The one-dimensional theory,
i.e. the case $m = d = 1$, could so-far capture only probabilistic
properties of the distribution of $X_1$. Indeed, as seen in
\cite[Example~2.3]{LjunMult} a joint three-dimensional distribution
$(X_1, X_2, X_3)$ is required to identify the self-similarity
parameter~$H$ of the linear fractional stable motion. Such a
requirement is fulfilled by Theorem~\ref{thm:MA-clt} according to
Remark~\ref{rem:MA-CLT}\ref{it:rem:MA-CLT:3} and \cite{LjunMult}
indeed uses Theorem~\ref{thm:MA-clt} as basis for a minimal contrast
estimator of multi-parameter heavy-tailed moving averages. This also
explains the shortcomings of \cite{LjunAMin}, where a ratio estimator
had to be used in conjunction with the minimal contrast approach.

We would like to mention finally that the case $m = 1$ and general $d$
has been considered in the seminal paper \cite{PipiCent}. Since here
$m$ is equal to $1$, the main result of that paper is not able to deal
with neither finite dimensional distributions such as
$(X_1, X_2, X_3)$ nor functionals whose arguments depends on multiple,
different, moving averages with the same driving Lévy process.

\section{Main results}
\label{sec:limit-theory}

\subsection{A refined multivariate second-order Poincaré inequality}
\label{sec:Poincare}

\noindent Consider a measurable space $(S, \cS)$ equipped with a
$\sigma$-finite measure $\mu$. Let $\eta$ be a Poisson process on
$(S, \cS)$ with intensity measure $\mu$. This means that $\eta$ is a
collection of random variables of the form $\eta(B)$, $B \in \cS$,
with the properties that
\begin{enumerate}[label = (\roman*)]
\item for each $B \in \cS$ with $\mu(B) < \infty$ the random variable
  $\eta(B)$ is Poisson distributed with mean $\mu(B)$,

\item for $m \in \bbN$ and pairwise disjoint
  $B_1, \ldots, B_m \in \cS$ with
  $\mu(B_1), \ldots, \mu(B_m) < \infty$ the random variables
  $\eta(B_1), \ldots, \eta(B_m)$ are independent.
\end{enumerate}
We can and will regard $\eta$ as a random function from an underlying
probability space $(\Omega, \cF, \bbP)$ to $\cN$, the space of all
integer-valued $\sigma$-finite measures on $(S, \cS)$. The set $\cN$
is equipped with the evaluation $\sigma$-algebra, i.e. the
$\sigma$-algebra generated by the evaluation mappings
$\mu \mapsto \mu(A)$, $A \in \cS$.

To each Poisson process $\eta$ we associate the Hilbert space
$\cL^2_\eta(\bbP)$ consisting of all square integrable Poisson
functionals $F$, i.e. those random variables for which there exists a
function $\phi : \cN \to \bbR$ such that almost surely
$F = \phi(\eta) \in \cL^2(\bbP)$. Finally, we introduce the notion of
the Malliavin derivative in a Poisson setting, which is also known as
the add-one-cost operator. For each $z \in S$ and
$F = \phi(\eta) \in \cL^2_\eta(\bbP)$ we define $D_z F$ as
\begin{equation*}
  D_z F \coloneqq \phi(\eta + \delta_z) - \phi(\eta),
\end{equation*}
and note that $D F$ is a bi-measurable map from $\Omega \times S$ to
$\bbR$. In a straightforward way this definition extends to
vector-valued Poisson functionals. Indeed, consider
$F = (F_1, \ldots, F_d)$ where each $F_i$ lies in $\cL^2_\eta(\bbP)$,
then the Malliavin derivative $D_z F$ at $z \in S$ is given by
\begin{equation*}
  D_z F = (D_z F_1, \ldots, D_z F_d).
\end{equation*}
Similarly to $D_zF$ we may introduce the iterated Malliavin derivative
$D^2 F$ of $F$ by putting
\begin{equation*}
  D^2_{z_1, z_2} F \coloneqq D_{z_1}(D_{z_2}F)
  = D_{z_2}(D_{z_1}F), \qquad z_1, z_2 \in S.
\end{equation*}
For further background material on Poisson processes we refer to the
treatments in \cite{LastLect,LastPois,PeccStoc}---for the Malliavin
formalism on Poisson spaces we refer to Section~\ref{subsec:Malliavin}
below.

To measure the distance between (the laws of) two random vectors $X$
and $Y$ taking values in $\bbR^d$ we use the so-called $d_3$-distance,
see \cite{PeccMult}.  To introduce it, assume that
$\Expt{\norm{X}^2_{\bbR^d}}, \Expt{\norm{Y}^2_{\bbR^d}} < \infty$,
where $\norm{}_{\bbR^d}$ stands for the Euclidean norm in
$\bbR^d$. The $d_3$-distance between $X$ and $Y$, denoted by
$d_3(X, Y)$, is given by
\begin{equation*}
  d_3 (X, Y) \coloneqq \sup_{\varphi \in \cH_3} \abs[\big]{\Expt{\varphi(X)} - \Expt{\varphi(Y)}},
\end{equation*}
where the class $\cH_3$ of test functions indicates the collection of
all thrice differentiable functions $\varphi : \bbR^d \to \bbR$
(i.e. $\varphi \in \cC^3(\bbR^d, \bbR)$) such that
$\norm{\varphi''}_{\infty} \leq 1$ and
$\norm{\varphi'''}_{\infty} \leq 1$, where
\begin{align*}
  \norm{\varphi''}_\infty &\coloneqq \max_{1 \leq i, j \leq d} \sup_{x \in \bbR^d} \abs[\Big]{\frac{\partial^2}{\partial x_i \partial x_j} \varphi(x)},
  \\
  \norm{\varphi'''}_\infty &\coloneqq \max_{1 \leq i, j, k \leq d} \sup_{x \in \bbR^d} \abs[\Big]{\frac{\partial^3}{\partial x_i \partial x_j\partial x_k} \varphi(x)}.
\end{align*}

We can now formulate our multivariate second-order Poincaré
inequality, which generalizes \cite[Theorem~3.1]{BassBerr} and refines
\cite[Theorem 1.1]{SchuMult}. Its proof, which is given in
Section~\ref{sec:ProofThm1} below, is based on the Malliavin--Stein
technique for normal approximation of random vectors of Poisson
functionals. For two Poisson functionals $F, G \in \cL^2_\eta(\bbP)$
we define the quantities
\begin{align*}
  \gamma_1^2(F, G) &\coloneqq 3 \int_{S^3} \Expt[\big]{(D^2_{z_1, z_3} F)^2 (D^2_{z_2, z_3} F)^2}^{1/2}
                     \Expt[\big]{(D_{z_1} G)^2 (D_{z_2} G)^2 }^{1/2}
                     \isp \mu^3(\di z_1, \di z_2, \di z_3),
  \\
  \gamma_2^2(F, G) &\coloneqq \int_{S^3} \Expt[\big]{(D^2_{z_1, z_3} F)^2 (D^2_{z_2, z_3} F)^2}^{1/2} \Expt[\big]{(D^2_{z_1, z_3} G)^2 (D^2_{z_2, z_3} G)^2}^{1/2} \isp \mu^3(\di z_1, \di z_2, \di z_3).
\end{align*}
Moreover, for $x, y \in \bbR$ we denote by
$x \wedge y = \min \Set{x, y}$ the minimum of $x$ and $y$.

\begin{theorem}
  \label{thm:wasserstein-dist}
  Let $d \geq 1$ and assume that $F_1, \ldots, F_d \in \cL^2_\eta(\bbP)$
  satisfy $DF_i \in \cL^2(\bbP \otimes \mu)$ and $\Expt{F_i} = 0$ for
  all $i \in \Set{1, \ldots, d}$. Let
  $\sigma_{i k} \coloneqq \Expt{F_i F_k}$ and define the covariance
  matrix $\Sigma^2 = (\sigma_{ik})_{i,k = 1}^{d}$. Let
  $Y \sim N_d(0, \Sigma^2)$ be a centred Gaussian random vector with
  covariance matrix $\Sigma^2$ and put $F \coloneqq (F_1, \ldots,
  F_d)$. Then
  \begin{equation*}
    d_3(F, Y) \leq \sum_{i, k = 1}^{d} (\gamma_{1}(F_i,F_k) + \gamma_{2}(F_i,F_k))
    + \gamma_3,
  \end{equation*}
  where the term $\gamma_3$ is defined as
  \begin{equation}
    \label{eq:gamma3}
    \gamma_3
    \coloneqq \sum_{i, j, k = 1}^{d}
    \int_S \Expt[\big]{\abs{D_z F_j D_z F_k}^{3/2} \wedge \norm{D_z F}_{\bbR^d}^{3/2}}^{2/3} \Expt[\big]{\abs{D_z F_i}^3}^{1/3} \isp \mu(\di z).
  \end{equation}
\end{theorem}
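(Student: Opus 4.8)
The plan is to run the multivariate Malliavin--Stein method for the $d_3$-distance, in the spirit of \cite{PeccMult}, but to control the third-order term that arises by the small-/large-value truncation of \cite{BassBerr} rather than by a bare third moment. Fix a test function $\varphi \in \cH_3$ and let $g = g_\varphi \in \cC^3(\bbR^d, \bbR)$ solve the multivariate Stein equation
\begin{equation*}
  \tr\bigl(\Sigma^2\, \nabla^2 g(x)\bigr) - \iprod{x, \nabla g(x)}_{\bbR^d} = \varphi(x) - \Expt{\varphi(Y)}, \qquad x \in \bbR^d.
\end{equation*}
Since $\varphi$ has bounded second and third derivatives, $g$ can be chosen so that $\norm{g''}_\infty \le \tfrac12$ and $\norm{g'''}_\infty \le \tfrac13$; note that no invertibility of $\Sigma^2$ enters, which is exactly why the hypothesis on $\Sigma^2$ need not be restricted. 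Plugging $F$ into the Stein equation and taking expectations gives $\Expt{\varphi(F)} - \Expt{\varphi(Y)} = \sum_{i,k=1}^d \sigma_{ik}\Expt{\partial_i\partial_k g(F)} - \sum_{k=1}^d \Expt{F_k\,\partial_k g(F)}$, and since $\Expt{F_k} = 0$ the Poisson integration-by-parts formula rewrites the last sum as $\sum_k \Expt{\int_S D_z\bigl(\partial_k g(F)\bigr)\,(-D_z L^{-1}F_k)\,\mu(\di z)}$.

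The next step is the discrete chain rule with remainder: for each $z \in S$,
\begin{equation*}
  D_z\bigl(\partial_k g(F)\bigr) = \sum_{i=1}^d \partial_i\partial_k g(F)\, D_z F_i + R_{z,k}, \qquad R_{z,k} \coloneqq \partial_k g(F + D_z F) - \partial_k g(F) - \sum_{i=1}^d \partial_i\partial_k g(F)\, D_z F_i .
\end{equation*}
Substituting this identity splits $\Expt{\varphi(F)} - \Expt{\varphi(Y)}$ into a covariance-discrepancy part, namely $\sum_{i,k} \Expt{\partial_i\partial_k g(F)\bigl(\sigma_{ik} - \int_S D_z F_i\,(-D_z L^{-1}F_k)\,\mu(\di z)\bigr)}$, and a remainder part $-\sum_k \Expt{\int_S R_{z,k}\,(-D_z L^{-1}F_k)\,\mu(\di z)}$. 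For the covariance-discrepancy part I would bound $\abs{\partial_i\partial_k g(F)} \le \tfrac12$, use $\sigma_{ik} = \Expt{\int_S D_z F_i\,(-D_z L^{-1}F_k)\,\mu(\di z)}$ and the Cauchy--Schwarz inequality to pass to the variance of $\int_S D_z F_i\,(-D_z L^{-1}F_k)\,\mu(\di z)$, bound that variance by the Poincaré inequality, and expand the emerging add-one-cost derivative by the product rule $D_z(GH) = (D_z G)H + G(D_z H) + (D_z G)(D_z H)$ into the three terms $(D^2_{z,y}F_i)(-D_y L^{-1}F_k)$, $(D_y F_i)(-D^2_{z,y}L^{-1}F_k)$ and $(D^2_{z,y}F_i)(-D^2_{z,y}L^{-1}F_k)$ --- this is the source of the factor $3$ in $\gamma_1^2$. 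Applying Cauchy--Schwarz pointwise in $S^3$ together with the Mehler-formula contraction estimates
\begin{equation*}
  \Expt[\big]{(D^2_{z_1,z_3}L^{-1}F)^2 (D^2_{z_2,z_3}L^{-1}F)^2} \le \Expt[\big]{(D^2_{z_1,z_3}F)^2 (D^2_{z_2,z_3}F)^2}, \qquad \Expt[\big]{(D_{z_1}L^{-1}F)^2(D_{z_2}L^{-1}F)^2} \le \Expt[\big]{(D_{z_1}F)^2(D_{z_2}F)^2},
\end{equation*}
available from \cite{LastPois, BassBerr}, collapses the resulting triple integrals into multiples of $\gamma_1^2(F_i,F_k)$, $\gamma_1^2(F_k,F_i)$ and $\gamma_2^2(F_i,F_k)$; keeping track of the constant from the bound on $\partial_i\partial_k g$, of the three product-rule terms and of the symmetry of the double sum in $i$ and $k$, the covariance-discrepancy part is seen to be at most $\sum_{i,k=1}^d (\gamma_1(F_i,F_k) + \gamma_2(F_i,F_k))$.

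For the remainder part I would estimate $R_{z,k}$ in two complementary ways. A second-order Taylor expansion gives $\abs{R_{z,k}} \le \tfrac12 \norm{g'''}_\infty \sum_{i,j} \abs{D_z F_i\, D_z F_j}$, while writing $R_{z,k} = \int_0^1 \sum_i \bigl(\partial_i\partial_k g(F + t D_z F) - \partial_i\partial_k g(F)\bigr)\, D_z F_i \Di t$ and using only $\norm{g''}_\infty$ gives $\abs{R_{z,k}} \le 2\norm{g''}_\infty \sum_i \abs{D_z F_i}$. Distributing the second bound over index pairs via $\sum_i \abs{D_z F_i} = \sum_{i,j} \abs{D_z F_i D_z F_j}\bigl(\sum_\ell \abs{D_z F_\ell}\bigr)^{-1}$ and combining with the first, the $(i,j)$-contribution to $\abs{R_{z,k}}$ is dominated, with an absolute constant, by $\abs{D_z F_i D_z F_j} \wedge \norm{D_z F}_{\bbR^d}$. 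Multiplying by $\abs{D_z L^{-1}F_k}$, taking the expectation, applying Hölder's inequality with exponents $\tfrac32$ and $3$, using the contraction estimate $\Expt{\abs{D_z L^{-1}F_k}^3} \le \Expt{\abs{D_z F_k}^3}$ and the identity $(a \wedge b)^{3/2} = a^{3/2} \wedge b^{3/2}$, and finally summing over $i, j, k$, this part is bounded by $\gamma_3$. Taking the supremum over $\varphi \in \cH_3$ then yields the stated inequality.

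The genuinely delicate point is the remainder part: one must make the multivariate version of the Bass--Bercu small-/large-value truncation reproduce precisely the minimum $\abs{D_z F_j D_z F_k}^{3/2} \wedge \norm{D_z F}_{\bbR^d}^{3/2}$ with the correct index pattern, and it is exactly here that the $d_3$-distance --- whose test functions, and hence whose Stein solutions, have both second and third derivatives under control --- is the right object rather than a finer distance such as $d_2$. A secondary technical burden is the Mehler-type contraction inequalities for the iterated difference operator $D^2 L^{-1}$, together with the constant bookkeeping needed to keep every contribution at or below the stated level.
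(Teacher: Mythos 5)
Your argument is correct in substance, but it follows a genuinely different route from the paper. You solve the multivariate Stein equation directly (via the Ornstein--Uhlenbeck semigroup, implicitly) and invoke the derivative bounds $\norm{g''}_\infty \le \tfrac12\norm{\varphi''}_\infty$, $\norm{g'''}_\infty \le \tfrac13\norm{\varphi'''}_\infty$ for a possibly degenerate $\Sigma^2$; the paper instead uses the interpolation (``smart path'') technique of Peccati et al., writing $\Psi(t) = \Expt{\varphi(\sqrt{1-t}\,F + \sqrt{t}\,Y)}$ and bounding $\sup_t\abs{\Psi'(t)}$, so that Stein's Lemma is only ever applied to the Gaussian coordinate and no Stein solution has to be constructed. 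The trade-off is real: your route needs, as an external input, the existence and regularity of the Stein solution without positive definiteness of $\Sigma^2$ --- a true and standard fact for the $d_3$-class (it is exactly the $d_2$-class where invertibility becomes unavoidable), but one you assert rather than prove, and the interpolation method exists precisely to sidestep it. After that point the two arguments are parallel: both split into a covariance-discrepancy term and a remainder. For the former the paper simply cites Proposition~4.1 of Last--Peccati--Schulte to get $\Expt{\abs{\sigma_{ik} - \iprod{DF_k, -DL^{-1}F_i}_{\cL^2(\mu)}}} \le 2(\gamma_1 + \gamma_2)$, whereas you re-derive it from the Poincaré inequality, the product rule for $D$ and the Mehler contraction estimates --- more self-contained, and your accounting of the factor $3$ and of the $\tfrac12$ from $\norm{g''}_\infty$ is consistent. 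For the remainder, your two complementary bounds on $R_{z,k}$ and the redistribution of $\sum_i\abs{D_zF_i}$ over index pairs do yield the per-pair minimum $\abs{D_zF_iD_zF_j}\wedge\norm{D_zF}_{\bbR^d}$ (the two decompositions are proportional termwise, so taking minima termwise is legitimate here); the paper reaches the same point slightly differently, using the global mean-value bound $2\sqrt{1-t}\,\norm{D_zF}_{\bbR^d}$ together with sub-additivity of the minimum. The final Hölder step with exponents $3$ and $3/2$ and the contraction $\Expt{\abs{D_zL^{-1}F_k}^3}\le\Expt{\abs{D_zF_k}^3}$ is identical in both.
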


\begin{remark}
  \label{rem:CLT}
  \leavevmode
  \begin{enumerate}[label = (\roman*)]
  \item The difference between Theorem~\ref{thm:wasserstein-dist} and
    \cite[Theorem 1.1]{SchuMult} lies in the term $\gamma_3$. We
    emphasize that the bound in \cite{SchuMult} does not lead to a
    meaningful error bound in the application to heavy-tailed moving
    averages we consider in the next section as the corresponding
    $\gamma_3$-term in \cite{SchuMult} would diverge. Similarly to the
    univariate case, the bound provided by
    Theorem~\ref{thm:wasserstein-dist} is much more suitable for our
    purposes as it leads to a reasonable error bound, which tends to
    zero, as the number of observations $n$ there tends to infinity.

  \item It is in principal possible to derive error bounds as in
    Theorem~\ref{thm:wasserstein-dist} for probability metrics different
    from the $d_3$-metric. Namely, assuming in addition that the
    covariance matrix $\Sigma^2$ is \emph{positive definite}, one can
    deal with the $d_2$-distance used in \cite{PeccMult} and even with
    the convex distance introduced and studied in \cite{SchuMult}. Since
    the corresponding error bounds for these notions of distance become
    rather long and technical, we refrain from presenting results in
    this direction. Moreover, in our application in the next section it
    seems in general rather difficult to check whether or not the
    covariance matrix is positive definite. This is another reason for
    us considering only the $d_3$-distance.
    
  \item We would like to point out that quantitative central limit
    theorems for random vectors of Poisson functionals having a
    \textit{finite} Wiener--It\^o chaos expansion with respect to the
    $d_3$-distance were obtained \cite{LastMome}. Specifically, random
    vectors of so-called Poisson U-statistics were considered in
    \cite{LastMome} together with applications in stochastic geometry to
    Poisson process of $k$-dimensional flat in $\bbR^n$.
  \end{enumerate}
\end{remark}

\subsection{Asymptotic normality of multivariate heavy-tailed moving
  averages}

\noindent Here, we present our application of the refined multivariate
second-order Poincaré inequality formulated in the previous
section. For this recall the set-up described in the
introduction. Especially, recall the definition of the random
processes $(X_t^i)_{t \in \bbR}$, $i \in \Set{1, \ldots, m}$ from
\eqref{eq:MA}. Also recall that the exponents $\alpha_i$ control the
memory of the processes $X^i$. Given the limit theory for heavy-tailed
moving averages as developed in \cite{LjunALim} it comes as no
surprise that the smallest such $\alpha_i$ will be of dominating
importance. Hence, we define
\begin{equation*}
  \underline{\alpha} = \min \Set{\alpha_1, \ldots, \alpha_m}, \qquad \text{and similary} \qquad
  \overline{\alpha} = \max \Set{\alpha_1, \ldots, \alpha_m}.
\end{equation*}
Finally, by $\cC_b^2(\bbR^m, \bbR^d)$ we denote the space of bounded
functions $f : \bbR^m \to \bbR^d$ which are twice continuously
differentiable and have all partial derivatives up to order two
bounded by some constant.

\begin{theorem}
  \label{thm:MA-clt}
  Fix $d, m \geq 1$. Let $(X_t^i)$, $i = 1, \ldots, m$, be moving
  averages as in \eqref{eq:MA} with Lévy measure having density $w$
  satisfying \eqref{eq:stable-levy-measure} for some
  $\beta \in (0, 2)$ and kernels $g_i$ which satisfy
  \eqref{eq:kernel-ass} with $\alpha_i \beta> 2$ and
  $\kappa_i > -1/\beta$. Let a function
  $f = (f_1, \ldots, f_d) \in \cC_b^2(\bbR^m, \bbR^d)$ be given and
  consider $V_n(X; f)$ as in \eqref{eq:main-statistic} based on $f$
  and $X = (X^1, \ldots, X^m)$. Let
  $\Sigma_n = \cov(V_n(X; f))^{1 / 2}$ denote a positive semi-definite
  square root of the covariance matrix $\cov(V_n(X; f))$ of the
  $d$-dimensional random vector $V_n(X; f)$. Then
  $\Sigma_n \to \Sigma = (\Sigma_{i,j})_{i, j = 1}^d$, as
  $n \to \infty$, where, for $i,j \in \Set{1, \ldots, d}$,
  \begin{equation}
    \label{eq:asymp-cov}
    \begin{aligned}
      \Sigma^2_{i, j} &= \sum_{s = 0}^{\infty} \cov(f_i(X_s^1, \ldots, X_s^m), f_j(X_0^1, \ldots, X_0^m))
      \\
                      &\qquad+ \sum_{s = 1}^{\infty} \cov(f_i(X_0^1, \ldots, X_0^m), f_j(X_s^1, \ldots, X_s^m)).
    \end{aligned}
  \end{equation}
  Moreover, $V_n(X; f)$ converges in distribution, as $n \to \infty$,
  to a $d$-dimensional centred Gaussian random vector
  $Y \sim N_d(0, \Sigma^2)$ with covariance matrix $\Sigma^2$. More
  precisely, there exists a constant $C > 0$ which only depends on
  $\underline{\alpha}$, $\overline{\alpha}$, $\beta$ and the sup-norms
  of the partial derivatives of $f$, such that
  \begin{equation*}
    d_3(V_n(X; f), Y) \leq C d^4 m^4
    \begin{cases}
      n^{-1/2}, & \text{if $\underline{\alpha} \beta > 3$,}
      \\
      n^{-1/2} \log(n), & \text{if $\underline{\alpha} \beta = 3$,}
      \\
      n^{(2 - \underline{\alpha} \beta)/2}, & \text{if
        $2 < \underline{\alpha} \beta < 3$.}
    \end{cases}
  \end{equation*}
\end{theorem}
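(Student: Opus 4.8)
The strategy is to realize the moving-average vector $X$ as a functional of a Poisson random measure and then apply Theorem~\ref{thm:wasserstein-dist}. Write $L$ via its Lévy--Itô representation as an integral against a compensated Poisson measure $\tilde\eta$ on $S = \bbR\times\bbR$ (time $\times$ jump size) with intensity $\mu(\di s,\di x) = \di s\,\nu(\di x)$, so that $X_s^i = \int_{S} g_i(s-u)\,x\,\tilde\eta(\di u,\di x)$ up to a deterministic drift that cancels in the centring. Each coordinate $V_n^j(X;f)$ is then a square-integrable Poisson functional, and the add-one-cost operator acts by $D_{(u,x)}X_s^i = g_i(s-u)\,x$, whence, by the chain rule / Taylor expansion for the difference operator together with $f\in\cC_b^2$, one gets the pointwise bounds
\begin{align*}
  \abs{D_{(u,x)}V_n^j} &\leq \frac{\abs{x}}{\sqrt n}\sum_{s=1}^{n}\sum_{i=1}^{m}\norm{\partial_i f_j}_\infty\,\abs{g_i(s-u)},\\
  \abs{D^2_{(u_1,x_1),(u_2,x_2)}V_n^j} &\leq \frac{\abs{x_1 x_2}}{\sqrt n}\sum_{s=1}^{n}\sum_{i,i'=1}^{m}\norm{\partial_{i}\partial_{i'}f_j}_\infty\,\abs{g_i(s-u_1)}\,\abs{g_{i'}(s-u_2)}.
\end{align*}
This is exactly the univariate mechanism of \cite{BassBerr}, now carried out coordinatewise, and the factors $d^3 m^8$ in the statement arise from counting the sums over coordinates $i,j,k\in\{1,\dots,d\}$ in the bound of Theorem~\ref{thm:wasserstein-dist} and over the moving-average indices in the two displays above.

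Next I would estimate $\gamma_1(F_i,F_k)$, $\gamma_2(F_i,F_k)$ and $\gamma_3$ with $F=V_n(X;f)-\Expt{V_n(X;f)}$. Inserting the pointwise bounds, taking the fractional moments of $\abs{x}^p$ against $\nu$ (which are finite for $p>\beta$ by \eqref{eq:stable-levy-measure}, and one must keep the integrand restricted to $\abs{x}\leq 1$ versus $\abs{x}>1$ — this is where the heavy tails are handled, using the $\wedge$-truncation in $\gamma_3$ precisely as in \cite{BassBerr}), everything reduces to deterministic convolution sums of the form
\begin{equation*}
  \frac{1}{n^{?}}\sum_{s,t=1}^{n}\ \Bigl(\sum_{u}\text{(products of shifted kernels)}\Bigr)^{?},
\end{equation*}
which are controlled via Young's/Hölder's inequality and the kernel bound \eqref{eq:kernel-ass}. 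The key elementary estimate is that $\int_\bbR \abs{g_i(s-u)}^{p}\abs{g_{i'}(t-u)}^{q}\,\di u \lesssim \abs{s-t}^{-(p\alpha_i+q\alpha_{i'})+1}$ for $\abs{s-t}$ large (with the near-zero part of $g_i$ harmless because $\kappa_i\beta>-1$ makes the relevant powers integrable), so that the double sum over $1\leq s,t\leq n$ converges when $\underline\alpha\beta>3$, grows like $\log n$ when $\underline\alpha\beta=3$, and grows like $n^{3-\underline\alpha\beta}$ when $2<\underline\alpha\beta<3$; after the $n^{-1/2}$ normalizations built into $D V_n$, this yields exactly the three regimes in the statement. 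The convergence $\Sigma_n\to\Sigma$ with the series formula \eqref{eq:asymp-cov} is separate and easier: it follows from the stationarity of $(X_s)_{s\in\bbZ}$, boundedness of $f$, and absolute summability of the covariances $\cov(f_i(X_s),f_j(X_0))$, which is itself a consequence of the same kernel estimates under $\underline\alpha\beta>2$ together with a covariance inequality for functionals of infinitely divisible processes (or directly from the $\gamma$-bounds, which also dominate the off-diagonal covariance decay).

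Finally I would assemble the pieces: Theorem~\ref{thm:wasserstein-dist} applied to $F$ gives $d_3(F,Y_n)\leq \sum_{i,k}(\gamma_1(F_i,F_k)+\gamma_2(F_i,F_k))+\gamma_3$ with $Y_n\sim N_d(0,\Sigma_n^2)$; the bounds above show the right-hand side is $\leq C d^3 m^8 \rho_n$ with $\rho_n$ the stated rate; and a triangle-inequality step $d_3(F,Y)\leq d_3(F,Y_n)+d_3(Y_n,Y)$ together with the Lipschitz-type estimate $d_3(N_d(0,A),N_d(0,B))\lesssim \norm{A-B}$ (so that $d_3(Y_n,Y)\to 0$ at a rate no slower than $\norm{\Sigma_n^2-\Sigma^2}$, which the same convergence analysis shows is $O(\rho_n)$) closes the argument; convergence in distribution follows since $d_3$ metrizes weak convergence on laws with uniformly bounded second moments. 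The main obstacle is the careful bookkeeping in the second paragraph: getting the exponents in the convolution sums right so that the truncation at $\abs{x}=1$ and the $\wedge$ in $\gamma_3$ interact correctly to produce a rate that genuinely tends to zero (this is the whole point of using the refined inequality rather than \cite{SchuMult}), and tracking the polynomial dependence on $d$ and $m$ through all the Hölder steps to land on the clean factor $d^3m^8$.
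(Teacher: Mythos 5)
Your overall strategy --- the Poisson representation of $X$, a coordinatewise application of Theorem~\ref{thm:wasserstein-dist}, kernel estimates for the resulting deterministic sums, and a separate stationarity argument for $\Sigma_n \to \Sigma$ (your triangle-inequality step replacing $N_d(0,\Sigma_n^2)$ by $N_d(0,\Sigma^2)$ is, if anything, spelled out more explicitly than in the paper) --- is exactly the paper's. However, two of your concrete estimates are wrong in ways that would break the execution. First, your displayed bounds $\abs{D_{(u,x)}V_n^j}\le \frac{\abs{x}}{\sqrt n}\sum_{s,i}\norm{\partial_i f_j}_\infty\abs{g_i(s-u)}$ and its second-order analogue omit the truncation coming from the boundedness of $f$ itself. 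The bounds the paper actually uses are $\abs{D_{z}f_j(X_s)}\le C\,(1\wedge\norm{\delta_s(z)}_{\bbR^m})$ and $\abs{D^2_{z_1,z_2}f_j(X_s)}\le C\,(1\wedge\norm{\delta_s(z_1)}_{\bbR^m})(1\wedge\norm{\delta_s(z_2)}_{\bbR^m})$, and the entire computation rests on the identity $\int_{\bbR}(1\wedge x^2y)\,\abs{x}^{-1-\beta}\Di x = C y^{\beta/2}$. With your untruncated bounds the $\mu$-integrals defining $\gamma_1$ and $\gamma_2$ contain $\int_{\abs{x}>1}x^2\,\nu(\di x)$, which is not controlled by \eqref{eq:stable-levy-measure} and is infinite for a genuinely $\beta$-stable driver, for every $\beta\in(0,2)$. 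Note that $\gamma_1$ and $\gamma_2$, unlike $\gamma_3$, contain no built-in $\wedge$, so the needed cap cannot come from the definition of the $\gamma$'s or from an a posteriori restriction to $\abs{x}\le 1$; it must be injected at the level of the add-one-cost bound via $\norm{f}_\infty<\infty$.

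Second, your key convolution estimate is incorrect: for $p=q=\beta/2$ the integral $\int_{\bbR}\abs{g_i(s-u)}^{\beta/2}\abs{g_{i'}(t-u)}^{\beta/2}\Di u$ is dominated by the neighbourhoods of $u=s$ and $u=t$ (where one factor is merely integrable, thanks to $\kappa\beta/2>-1/2$, and the other decays polynomially), so it behaves like $\abs{s-t}^{-(\alpha_i\wedge\alpha_{i'})\beta/2}$ as in Lemma~\ref{lem:cov-ineq}, not like $\abs{s-t}^{1-(\alpha_i+\alpha_{i'})\beta/2}$; your exponent keeps only the subdominant cross region. Relatedly, the three rate regimes do not originate in these convolution sums: under $\underline{\alpha}\beta>2$ the sums $\sum_k\rho_{i,j,k}$ always converge, and $\gamma_1,\gamma_2=O(n^{-1/2})$ in all three regimes. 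The case distinction at $\underline{\alpha}\beta=3$ is produced entirely by $\gamma_3$, through a splitting of the jump-size variable into $\abs{x}\in(0,1)$, $[1,n^{\underline{\alpha}}]$ and $(n^{\underline{\alpha}},\infty)$ and of the time variable at $\abs{s}=n$ and $\abs{s}=x^{1/\underline{\alpha}}n^{1/(2\underline{\alpha})}$, which balances the bounds $A_n(x,s)\le Cn^{-1/2}x^{1/\underline{\alpha}}$ and $A_n(x,s)\le Cxn^{1/2}\abs{s}^{-\underline{\alpha}}$ against the two exponents $p\le 2$ and $q=3$ appearing in $\int A_n^p\wedge A_n^q\Di\mu$. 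Your plan contains no analogue of this splitting, and without it the $\gamma_3$ bound --- which is precisely where the refined inequality earns its keep over \cite{SchuMult} --- does not go through.
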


\begin{remark}
  \label{rem:MA-CLT}
  \leavevmode
  \begin{enumerate}[label = (\roman*)]
  \item\label{it:rem:MA-CLT:1} We remark that in the special case
    $d = m = 1$ the order for the $d_3$-distance provided by
    Theorem~\ref{thm:wasserstein-dist} is precisely the same as that
    for the Wasserstein distance in \cite{BassBerr}. Note however, that even in this case our result extends the one in \cite{BassBerr}, since since we handle the non-casual case as well.

  \item We note that
    the first-order limit theory for the non-scaled and
    non-centred statistics
    \begin{equation*}
      \frac{1}{n} \sum_{s = 1}^{n} f(X_s^1, \ldots, X_s^m) ,\quad n \in \bbN,
    \end{equation*}
    is well-known for bounded functionals $f : \bbR^m \to
    \bbR^d$. Indeed, by ergodicity of Lévy moving averages, see
    \cite{PassMixi}, the non-centred and non-scaled statistic
    converges almost surely to $\Expt{f(X^1_0, \ldots, X^m_0)}$ by
    Birkhoff's ergodic theorem. Against this light it is then natural
    to study (weak) convergence of the scaled and centred statistic
    $V_n(X; f)$ at \eqref{eq:main-statistic}. In the case that
    $\underline{\alpha} \beta < 2$ one may obtain a non-central and
    non-Gaussian weak limit theorem. Indeed, if
    $\underline{\alpha} \beta < 2$ by \cite[Theorem~2.2]{MazuEsti} one
    obtains a skewed stable random variable as a limit, which shows
    that one cannot expect the central limit theorem to hold for
    $\underline{\alpha} \beta < 2$. We refer to \cite{BassOnLi} for a
    discussion for more general functionals in the high frequency
    case.

  \item\label{it:rem:MA-CLT:2} Even for particular functions
    $f = (f_1, \ldots, f_d)$, such as trigonometric functions, it
    seems to be a rather demanding task to check whether the
    covariance matrix $\Sigma^2$ is positive definite or not. Note in
    this context that even in the one-dimensional case $d = m = 1$ the
    question of whether the asymptotic variance constant is strictly
    positive or not is generally difficult. This is the reason why we
    are working with the $d_3$-distance in this paper, since more
    refined probability metrics usually require positive definiteness
    of the covariance matrix, see Remark~\ref{rem:CLT}.

  \item\label{it:rem:MA-CLT:3} It is straightforward to modify the
    proof of Theorem~\ref{thm:MA-clt} to the situation where
    $X = (X_1, \ldots, X_m)$ for some fixed moving average
    $(X_t)_{t \in \bbR}$ as in \eqref{eq:MA} and where the kernel $g$
    satisfy
    \begin{equation*}
      \abs{g(x)} \leq K \bigl(\abs{x}^{\kappa} \1_{[0, a)}(\abs{x}) + \abs{x}^{-\alpha} \1_{[a, \infty)}(\abs{x}) \bigr)
    \end{equation*}
    for some constants $a, \alpha, K > 0$ and $\kappa \in \bbR$ such
    that $\alpha \beta > 2$ and $\kappa > - 1/\beta$. In
    this case the kernel of $X^i = X_i$ is simply
    $g_i = g(i + \argmrk)$. Choosing an appropriate functional $f$ in
    $V_n(X; f)$, such as the empirical characteristic function of $X$,
    opens up the possibility of inference on $(X_t)_{t \in \bbR}$ based
    on not only the marginal distribution $X_1$ as in much of the
    previous literature, but also on the joint distribution
    $(X_1, \ldots, X_m)$.
  \end{enumerate}
\end{remark}

\noindent As in \cite{BassBerr}, Theorem~\ref{thm:wasserstein-dist}
can be applied to particular processes $(X_t^i)$. We mention here the
linear fractional stable noises, which may be regarded as heavy-tailed
extensions of a fractional Gaussian noise. Let $L$ be a $\beta$-stable
Lévy process with $\beta \in (0,2)$ and put
\begin{equation*}
  X_t^i \coloneqq Y_t - Y_{t - 1} \qquad\text{for}\qquad
  Y_t^i \coloneqq \int_{-\infty}^t \bigl[(t - s)_+^{H_i - 1/\beta} - (-s)_+^{H_i - 1/\beta} \bigr] \Di L_s,
\end{equation*}
where $H_1, \ldots, H_m \in (0,1)$ (if $\beta = 1$ we additionally
suppose that $L$ is symmetric). In this case,
$\alpha_i = 1 - H_i + 1/\beta$ for all $i \in \Set{1, \ldots, m}$ and
the condition $\underline{\alpha} \beta > 2$ translates into
$\beta \in (1, 2)$ and
$\max \Set{H_1, \ldots, H_m} < 1 - \frac{1}{\beta}$. Note that since
$\beta > 1$ we automatically have that $\underline{\alpha} \beta <
3$. In this set-up the bound in Theorem~\ref{thm:wasserstein-dist}
reads as follows:
\begin{equation*}
  d_3(V_n(X; f), Y) \leq C \, n^{1/2 - \beta(1 - \max \Set{H_1, \ldots, H_m}) / 2}.
\end{equation*}
As a second application we mention a stable Ornstein--Uhlenbeck
process. Again, for a $\beta$-stable Lévy process $L$ with
$\beta \in (0, 2)$ define for $i \in \{1, \ldots, m\}$,
\begin{equation*}
  X_t^i \coloneqq \int_{-\infty}^{t} e^{-\lambda_i(t - s)}  \Di L_s,
\end{equation*}
where $\lambda_1, \ldots, \lambda_m > 0$. In this case, the parameters
$\alpha_1, \ldots, \alpha_m$ may be arbitrary and the error bound in
Theorem~\ref{thm:wasserstein-dist} reduces to
\begin{equation*}
  d_3(V_n(X; f), Y) \leq C \, n^{-1/2}.
\end{equation*}
In a similar spirit, one my consider multivariate quantitative central
limit theorems for functionals of linear fractional Lévy noises or
of stable fractional ARIMA processes, see \cite{BassBerr} for the
corresponding one-dimensional situations.

\section{Background material}

\subsection{Malliavin calculus on Poisson spaces}
\label{subsec:Malliavin}

\noindent To take advantage of the powerful Malliavin--Stein method we
need to recall some background material regarding the Malliavin
formalism on Poisson spaces. For further details we refer to
\cite{LastLect,LastPois,NualIntr}.

Throughout this section $\eta$ denotes a Poisson process with
intensity measure $\mu$ defined on some measurable space $(S, \cS)$
and over some probability space $(\Omega, \cF, \bbP)$. We start by
recalling that any $F \in \cL^2_\eta(\bbP)$ admits a chaos expansion
(with convergence in $\cL^2(\bbP)$). That is,
\begin{equation}
  \label{eq:chaos-exp}
  F = \sum_{n = 0}^{\infty} I_n(f_n),
\end{equation}
where $I_n$ denotes the $n$th order Wiener--It\^o integral with
respect to the compensated Poisson process $\eta - \mu$ and the
kernels $f_n \in \cL^2(\mu^n)$ are symmetric functions (i.e. they are
invariant under permutations of its variables). Especially,
$I_0(c) = c$ for all $c \in \bbR$.

The Kabanov--Skorohod integral $\delta$ is defined for a subclass of
random processes $u \in \cL^2(\bbP \otimes \mu)$ having chaotic
decomposition
\begin{equation*}
  u(z) = \sum_{n = 0}^{\infty} I_n(h_n(\argmrk, z)),
\end{equation*}
where for each $z\in S$ the function $h_n(\argmrk, z)$ is symmetric
and belongs to $\cL^2(\mu^n)$. Denoting by $\tilde{h}$ the canonical
symmetrization of a function $h : S^n \to \bbR$, i.e.
\begin{equation*}
  \tilde{h}(z_1, \ldots, z_n)
  = \frac{1}{n!} \sum_{\sigma \in S_n} h(z_{\sigma(1)}, \ldots, z_{\sigma(n)}),
\end{equation*}
with $S_n$ being the group of all permutations of
$\Set{1, \ldots, n}$, we put
\begin{equation*}
  \delta(u) \coloneqq \sum_{n = 0}^{\infty} I_{n + 1}(\tilde{h}_n),
\end{equation*}
whenever
$\sum_{n = 0}^{\infty} (n + 1)! \norm{\tilde{h}_n}_{\smash{\cL^2(\mu^{n
      + 1})}}^2 < \infty$ (we indicate this by writing
$u \in \dom \delta$), where $\norm{}_{\cL^2(\mu^{n + 1})}$ denotes the
usual $\cL^2$-norm with respect to $\mu^{n + 1}$.

Next, we shall define the two operators
$L : \dom L \to \cL^2_\eta(\bbP)$ and
$L^{-1} : \cL^2_\eta(\bbP) \to \cL^2_\eta(\bbP)$, where $\dom L$
denotes the class of Poisson functionals $F \in \cL^2_\eta(\bbP)$ with
chaos expansion as in \eqref{eq:chaos-exp} satisfying
$\sum_{n = 1}^{\infty} n^2 n!  \norm{f_n}_{\cL^2(\mu^n)}^2 <
\infty$. Then, we define
\begin{equation*}
  LF \coloneqq - \sum_{n = 1}^{\infty} n I_n(f_n).
\end{equation*}
Similarly, the pseudo-inverse $L^{-1}$ of $L$ acts on centred
$F \in \cL_\eta^2(\bbP)$ with chaotic expansion \eqref{eq:chaos-exp}
as follows:
\begin{equation*}
  L^{-1} F \coloneqq -\sum_{n = 1}^{\infty} \frac{1}{n} I_n(f_n).
\end{equation*}
Finally, we recall that for $F \in \cL_\eta^2(\bbP)$ with chaotic
expansion \eqref{eq:chaos-exp} satisfying
$\sum_{n = 0}^{\infty} (n + 1)! \norm{f_n}_{\cL^2(\mu^n)}^2 < \infty$
the Malliavin derivative admits the representation
\begin{equation*}
  D_z F = \sum_{n = 1}^{\infty} n I_{n - 1}(f_n(\argmrk, z)), \qquad z \in S.
\end{equation*}

Using these definitions and representations, one may prove the
following crucial formulas and relationships of Malliavin calculus,
which also play a prominent role in our approach:
\begin{enumerate}[label = (\arabic*)]
\item\label{it:op-rules:1} $L L^{-1} F = F$ if $F$ is centred.
  
\item\label{it:op-rules:2} $LF = -\delta DF$ for $F \in \dom L$.
  
\item\label{it:op-rules:3}
  $\Expt{F \delta(u)} = \Expt{\int_S (D_z F) u(z) \isp
    \mu(\di z)}$, when $u \in \dom \delta$.
\end{enumerate}

\subsection{Multivariate normal approximation by Stein's method}

\noindent Stein's method for multivariate normal approximation is a
powerful device to prove quantitative multivariate central limit
theorems. The proof of Theorem~\ref{thm:wasserstein-dist} is based on
the following result, which is known as Stein's~Lemma (see
\cite[Lemma~4.1.3]{NourNorm}). To present it, let us recall that the
Hilbert--Schmidt inner product between two $d \times d$ matrices
$A = (a_{i k})$ and $B = (b_{i k})$ is defined as
\begin{equation*}
  \iprod{A, B}_{\HS} = \tr(B^{\top} A)
  = \sum_{i, k = 1}^{d} b_{k i} a_{k i}.
\end{equation*}
Moreover, for a differentiable function $\varphi : \bbR^d \to \bbR$ we
shall write $\nabla \varphi$ for the gradient and $\nabla^2 \varphi$
for the Hessian of $\varphi$. Also, we let
$\iprod{}_{\bbR^d}$ denote the Euclidean scalar
product in $\bbR^d$.

\begin{lemma}[Stein's Lemma]
  \label{lem:stein}
  Let $\Sigma^2 \in \bbR^{d \times d}$ be a positive semi-definite
  matrix and $Y$ be a $d$-dimensional random vector. Then
  $Y \sim N_d(0, \Sigma^2)$ if and only if for all twice continuously
  differentiable functions $\varphi : \bbR^d \to \bbR$ with bounded
  derivatives one has that
  \begin{equation*}
    \Expt{\iprod{Y, \nabla \varphi(Y)}_{\bbR^d} - \iprod{\Sigma^2, \nabla^2 \varphi(Y)}_{\HS}} = 0.
  \end{equation*}
\end{lemma}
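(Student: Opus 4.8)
The plan is to prove Stein's Lemma for the multivariate Gaussian, characterizing $N_d(0,\Sigma^2)$ via the operator $\varphi \mapsto \iprod{y,\nabla\varphi(y)}_{\bbR^d} - \iprod{\Sigma^2, \nabla^2\varphi(y)}_{\HS}$.

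\textbf{Necessity.} First I would assume $Y \sim N_d(0,\Sigma^2)$ and show the expectation vanishes for every $\varphi \in \cC^2(\bbR^d,\bbR)$ with bounded first and second derivatives. The cleanest route is an integration-by-parts argument, which is most transparent when $\Sigma^2$ is invertible: writing the Gaussian density as $p(y) = c\,\exp(-\tfrac12 \iprod{y, (\Sigma^2)^{-1} y}_{\bbR^d})$, one has $\nabla p(y) = -p(y)\,(\Sigma^2)^{-1} y$, so $\iprod{\Sigma^2 \nabla p(y), e_i}_{\bbR^d} = -y_i\, p(y)$. Then $\Expt{Y_i\, \partial_i \varphi(Y)} = -\int \partial_i\varphi(y)\,\iprod{\Sigma^2\nabla p(y),e_i}\Di y = \int \sum_k \sigma_{ik}\,\partial_k\partial_i\varphi(y)\,p(y)\Di y$ after integrating by parts in each coordinate (boundary terms vanish by the Gaussian decay and the bounded-derivative hypothesis). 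Summing over $i$ gives $\Expt{\iprod{Y,\nabla\varphi(Y)}_{\bbR^d}} = \Expt{\iprod{\Sigma^2,\nabla^2\varphi(Y)}_{\HS}}$. For degenerate $\Sigma^2$ I would reduce to the full-rank case by writing $Y = A Z$ with $Z \sim N_r(0, I_r)$ where $A$ is $d\times r$ of rank $r = \operatorname{rank}\Sigma^2$ and $AA^\top = \Sigma^2$; applying the nondegenerate identity to $\psi(z) = \varphi(Az)$, whose Hessian is $A^\top (\nabla^2\varphi)(Az) A$, and using $\iprod{I_r, A^\top(\nabla^2\varphi)(Az)A}_{\HS} = \iprod{\Sigma^2, (\nabla^2\varphi)(Az)}_{\HS}$ together with $\iprod{z,\nabla\psi(z)}_{\bbR^r} = \iprod{Az, \nabla\varphi(Az)}_{\bbR^d}$ recovers the claim.

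\textbf{Sufficiency.} Conversely, suppose $Y$ satisfies the identity for all such $\varphi$. I would show $Y \sim N_d(0,\Sigma^2)$ by identifying characteristic functions. Fix $\lambda \in \bbR^d$ and apply the hypothesis to well-chosen test functions built from $y \mapsto e^{\mathrm{i}\iprod{\lambda,y}_{\bbR^d}}$ (taking real and imaginary parts, and truncating/smoothing so the derivatives are genuinely bounded and the approximation passes to the limit by dominated convergence, using $\Expt{\norm{Y}^2_{\bbR^d}}<\infty$, which itself follows by first testing against quadratic $\varphi$). For $\varphi(y) = e^{\mathrm{i}\iprod{\lambda,y}}$ one has $\nabla\varphi = \mathrm{i}\lambda\,\varphi$ and $\nabla^2\varphi = -\lambda\lambda^\top\varphi$, so the identity yields, writing $\phi_Y(\lambda) = \Expt{e^{\mathrm{i}\iprod{\lambda,Y}}}$, the ODE $\iprod{\lambda, \nabla\phi_Y(\lambda)}_{\bbR^d}\cdot(-\mathrm{i}) \cdot \mathrm{i} = \ldots$; more precisely $\mathrm{i}\,\Expt{\iprod{\lambda,Y}e^{\mathrm{i}\iprod{\lambda,Y}}} = \iprod{\Sigma^2,-\lambda\lambda^\top}_{\HS}\phi_Y(\lambda)$, i.e. $\tfrac{\di}{\di t}\phi_Y(t\lambda) = -t\,\iprod{\lambda,\Sigma^2\lambda}_{\bbR^d}\,\phi_Y(t\lambda)$. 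Since $\phi_Y(0) = 1$, Gronwall/uniqueness for this scalar linear ODE gives $\phi_Y(t\lambda) = \exp(-\tfrac{t^2}{2}\iprod{\lambda,\Sigma^2\lambda}_{\bbR^d})$; at $t = 1$ this is exactly the characteristic function of $N_d(0,\Sigma^2)$, so $Y \sim N_d(0,\Sigma^2)$ by uniqueness of characteristic functions.

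\textbf{Main obstacle.} The substantive point is not either algebraic computation but the approximation step in the sufficiency direction: the hypothesis is only available for $\varphi$ with \emph{bounded} derivatives, whereas $e^{\mathrm{i}\iprod{\lambda,y}}$, quadratic functions, and $\iprod{\lambda,y}e^{\mathrm{i}\iprod{\lambda,y}}$ are not of that form. I would handle this by multiplying by a smooth cutoff $\chi_R(y) = \chi(y/R)$ with $\chi \equiv 1$ on the unit ball and supported in the ball of radius $2$, verifying that the extra terms coming from derivatives of $\chi_R$ are $O(1/R)$ in the relevant expectations (again using the already-established $\Expt{\norm{Y}^2_{\bbR^d}}<\infty$), and then letting $R \to \infty$. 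This is routine but is where care is genuinely needed; everything else is bookkeeping. Since this lemma is quoted from \cite[Lemma~4.1.3]{NourNorm}, an alternative is simply to cite it, but the argument above makes the paper self-contained.
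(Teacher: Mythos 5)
The paper offers no proof of this lemma: it is imported wholesale from \cite[Lemma~4.1.3]{NourNorm} and used as a black box in Section~\ref{sec:ProofThm1}, so your self-contained argument is by construction a different route. It is, however, the standard one, and both halves are essentially sound: for necessity, the identity $\Sigma^2\nabla p(y)=-y\,p(y)$ plus integration by parts is correct, and your reduction of the degenerate case via $Y=AZ$, $AA^\top=\Sigma^2$, works because $\iprod{I_r, A^\top H A}_{\HS}=\tr(HAA^\top)=\iprod{\Sigma^2,H}_{\HS}$ for symmetric $H$; for sufficiency, the ODE $\tfrac{\di}{\di t}\phi_Y(t\lambda)=-t\iprod{\lambda,\Sigma^2\lambda}_{\bbR^d}\phi_Y(t\lambda)$ with $\phi_Y(0)=1$ indeed pins down the characteristic function. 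The one place where your sketch is too optimistic is the bootstrap $\Expt{\norm{Y}_{\bbR^d}^2}<\infty$: multiplying a quadratic by the cutoff $\chi(\argmrk/R)$ produces gradient error terms of size $R^{-1}\norm{y}_{\bbR^d}^2\sim R$ on the annulus $R\le\norm{y}_{\bbR^d}\le 2R$, which you cannot dominate \emph{before} knowing the second moment is finite, so the claimed $O(1/R)$ control is circular there. The repair is to truncate monotonically rather than multiplicatively: replace $y_i^2$ by a $\cC^2$ function $\psi_R$ that is quadratic on $[-R,R]$, linear outside $[-2R,2R]$, with $\abs{\psi_R''}\le 2$ and $y_i\psi_R'(y_i)\ge 0$ increasing to $2y_i^2$; the Stein identity then gives $\Expt{Y_i^2}\le\sigma_{ii}$ by monotone convergence. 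For the characteristic-function step your multiplicative cutoff is fine as written, since there $\varphi$ and $\nabla\varphi$ are bounded and $\abs{\iprod{y,\nabla\chi_R(y)}}\le C\1_{\Set{\norm{y}_{\bbR^d}\ge R}}$ is dominated. With that single adjustment your proof is complete; it buys self-containedness, while the paper's citation of \cite{NourNorm} is the economical choice since the lemma is classical and plays only an auxiliary role.
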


\section{Proof of Theorem \ref*{thm:wasserstein-dist}}
\label{sec:ProofThm1}

\noindent By definition of the $d_3$-distance we need to prove that
\begin{equation*}
  \abs{\Expt{\varphi(Y)} - \Expt{\varphi(F)}}
  \leq \sum_{i, k = 1}^{d} (\gamma_{1}(F_i,F_k) + \gamma_{2}(F_i,F_k)) + \gamma_3
\end{equation*}
for every function $\varphi \in \cH_3$. For this, we may assume that
$Y$ and $F$ are independent. We start out by applying the
interpolation technique already demonstrated in
\cite{PeccMult}. Consider the function $\Psi : [0, 1] \to \bbR$ given
by
\begin{equation*}
  \Psi(t) \coloneqq \Expt{\varphi(\sqrt{1 - t} F + \sqrt{t} Y)}, \qquad t \in [0, 1].
\end{equation*}
Note that from the mean value theorem it follows that
\begin{equation*}
  \abs{\Expt{\varphi(Y)} - \Expt{\varphi(F)}} = \abs{\Psi(1) - \Psi(0)}
  \leq \sup_{t \in (0, 1)} \abs{\Psi'(t)}.
\end{equation*}
Hence it is enough to consider $\Psi'$, which is given by
\begin{equation*}
  \Psi'(t) = \Expt[\big]{\iprod{\nabla \varphi(\sqrt{1 - t} F + \sqrt{t} Y), \tfrac{1}{2 \sqrt{t}} Y - \tfrac{1}{2 \sqrt{1 - t}} F}_{\bbR^d}}
  \eqqcolon \frac{1}{2 \sqrt{t}} T_1 - \frac{1}{2 \sqrt{1 - t}} T_2.
\end{equation*}
We consider the two terms $T_1$ and $T_2$ separately. For $T_1$ it
follows first by independence of $F$ and $Y$ and Stein's~Lemma (used
on the function $y \mapsto \varphi(\sqrt{1 - t} a + \sqrt{t} y)$ and
then dividing by $\sqrt{t}$) that
\begin{align*}
  T_1 &= \Expt{\iprod{\nabla \varphi(\sqrt{1 - t} F + \sqrt{t} Y), Y}_{\bbR^d}}
  \\
      &= \Expt[\big]{\Expt{\iprod{\nabla \varphi(\sqrt{1 - t} a + \sqrt{t} Y), Y}_{\bbR^d}} \mid_{a = F}}
  \\
      &= \sqrt{t}\, \Expt[\big]{\Expt{\iprod{\Sigma^2, \nabla^2 \varphi(\sqrt{1 - t} a + \sqrt{t} Y)}_{\HS}} \mid_{a = F}}.
\end{align*}
Let $\partial_i f$ denote the derivative of $f$ in the $i$th
coordinate. We have by independence of $F$ and $Y$ and the Malliavin
rules \ref{it:op-rules:1}--\ref{it:op-rules:3} rephrased at the end of
Section~\ref{subsec:Malliavin} that
\begin{align*}
  T_2 &= \Expt{\iprod{\nabla \varphi(\sqrt{1 - t} F + \sqrt{t} Y), F}_{\bbR^d}}
        = \sum_{i = 1}^{d} \Expt[\big]{\Expt{\partial_i \varphi(\sqrt{1 - t} F + \sqrt{t} a) F_i} \mid_{a = Y}}
  \\
      &= \sum_{i = 1}^{d} \Expt[\big]{\Expt{\partial_i \varphi(\sqrt{1 - t} F + \sqrt{t} a) L (L^{-1} F_i) }\mid_{a = Y}}
  \\
      &= -\sum_{i = 1}^{d} \Expt[\big]{\Expt{\partial_i \varphi(\sqrt{1 - t} F + \sqrt{t} a) \delta (D L^{-1} F_i)} \mid_{a = Y}}
  \\
      &= \sum_{i = 1}^{d} \Expt[\big]{\Expt{\iprod{D \partial_i \varphi(\sqrt{1 - t} F + \sqrt{t} a), -D L^{-1} (F_i)}_{\cL^2(\mu)} } \mid_{a = Y}}.
\end{align*}
Consider now the function $\varphi_i^{t, a} : \bbR^d \to \bbR$ defined by
\begin{equation*}
  \varphi_i^{t, a}(x) \coloneqq \partial_i \varphi(\sqrt{1 - t} x + \sqrt{t} a).
\end{equation*}
By Taylor expansion we can write
\begin{equation*}
  D_z \varphi_i^{t, a}(F)
  = \sum_{k = 1}^{d} \partial_k \varphi_i^{t, a}(F) (D_z F_k) + R_i^a(D_z F)
\end{equation*}
for any $z \in \bbR^d$, where the remainder term
$R_i^a(D_z F) = \sum_{j, k = 1}^{d} R_{i, j, k}^a(D_z F_k, D_z F_j)$
satisfies the estimate
\begin{equation}
  \label{eq:remainder-1}
  \begin{aligned}
    \abs{R_{i, j, k}^a(x, y)} &\leq \tfrac{1}{2} \abs{x y} \max_{k, l} \sup_{x \in \bbR^d} \abs[\big]{\partial_{k, l} \varphi_i^{t, a}(x)}
    \\
                              &\leq \tfrac{1}{2} \abs{x y} (1 - t) \max_{k, l} \sup_{x \in \bbR^d} \abs[\big]{\partial_{i, k, l} \varphi(\sqrt{1 - t} x + \sqrt{t} a)} 
    \\
                              &\leq \tfrac{1}{2} (1 - t) \abs{x y}.
  \end{aligned}
\end{equation}
Here we have used the definition of the class $\cH_3$. On the other
hand, the remainder term also satisfies the inequality
\begin{equation}
  \begin{aligned}
    \label{eq:remainder-2}
    \abs[\Big]{D_z \varphi_i^{t, a}(F) - \sum_{k = 1}^{d} \partial_k \varphi_i^{t, a}(F) (D_z F_k)}
    &\leq \abs{D_z \varphi_i^{t, a}(F)} + \abs{\iprod{\nabla \varphi_i^{t, a}(F), D_z F}_{\bbR^d}}
    \\
    &\leq 2 \norm{\nabla \varphi_i^{t, a}(F)}_{\bbR^d} \norm{D_z F}_{\bbR^d}
    \\
    &\leq 2 \sqrt{1 - t} \norm{D_z F}_{\bbR^d},
  \end{aligned}
\end{equation}
where we used again the mean value theorem and the Cauchy--Schwarz
inequality. We may thus rewrite $T_2$ as
\begin{align*}
  T_2 &= \sum_{i, k = 1}^{d} \Expt[\big]{\Expt{\iprod{\partial_k \varphi_i^{t, a}(F) (D F_k), - D L^{-1} (F_i)}_{\cL^2(\mu)}} \mid_{a = Y}}
  \\
      &\qquad + \sum_{i = 1}^{d} \Expt[\big]{\Expt{\iprod{R_i^a(D F), - D L^{-1}(F_i)}_{\cL^2(\mu)}} \mid_{a = Y}}
  \\
      &= \sqrt{1 - t} \sum_{i, k = 1}^{d} \Expt[\big]{ \partial_{k, i} \varphi(\sqrt{1 - t} F + \sqrt{t} Y) \iprod{ D F_k, - D L^{-1} (F_i)}_{\cL^2(\mu)}}
  \\
      &\qquad + \sum_{i = 1}^{d} \Expt[\big]{\Expt{\iprod{R_i^a(D F), - D L^{-1}(F_i)}_{\cL^2(\mu)} } \mid_{a = Y}}.
\end{align*}
From this together with the Cauchy--Schwarz inequality and the bounds
\eqref{eq:remainder-1} and \eqref{eq:remainder-2} it follows that
\begin{align*}
  \MoveEqLeft \abs{\Expt{\varphi(Y)} - \Expt{\varphi(F)}} \leq \sup_{t \in (0, 1)} \abs{\Psi'(t)}
  \\
  &\leq \sup_{t \in (0, 1)} \frac{1}{2} \sum_{i, k = 1}^{d} \Expt[\Big]{\abs[\big]{\partial_{i, k} \varphi(\sqrt{1 - t} F + \sqrt{t} X)} \abs[\big]{\sigma_{ik} - \iprod{D F_k, -D L^{-1} (F_i)}_{\cL^2(\mu)}}}
  \\
  &\qquad + \sup_{t \in (0, 1)} \frac{1}{2 \sqrt{1 - t}} \sum_{i = 1}^{d} \Expt[\big]{{\abs{\iprod{R_i^a(D F), - D L^{-1}(F_i)}_{\cL^2(\mu)}} } \mid_{a = Y}}
  \\
  &\leq \frac{1}{2} \sum_{i, k = 1}^{d} \Expt[\big]{\abs{\sigma_{ik} - \iprod{D F_k, - D L^{-1} F_i}_{\cL^2(\mu)}}}
  \\
  &\qquad + \sum_{i, j, k = 1}^{d} \int_S \Expt{(\abs{D_z F_j D_z F_k} \wedge \norm{D_z F}_{\bbR^d}) \abs{D_z L^{-1} F_i}} \isp \mu(\di z).
\end{align*}
Applying now Proposition~4.1 in \cite{LastNorm} to the first of these
terms yields the inequality
\begin{equation*}
  \sum_{i, k = 1}^{d}  \Expt[\Big]{\abs[\big]{\sigma_{i k} - \iprod{D F_k, - D L^{-1} F_i}_{\cL^2(\mu)}}}
  \leq 2 \sum_{i, k = 1}^{d} (\gamma_{1, i, k} + \gamma_{2, i, k}).
\end{equation*}
For the remainder term we deduce by Hölder's inequality with exponents
$3$ and $3 / 2$ that
\begin{align*}
  \MoveEqLeft \int_S \Expt[\big]{(\abs{D_z F_j D_z F_k} \wedge \norm{D_z F}_{\bbR^d}) \abs{D_z L^{-1} F_i}} \isp \mu(\di z)
  \\
  &\leq \int_S \Expt[\big]{(\abs{D_z F_j D_z F_k} \wedge \norm{D_z F}_{\bbR^d})^{3/2}}^{2/3} \Expt[\big]{\abs{D_z L^{-1} F_i}^3}^{1/3} \isp \mu(\di z)
  \\
  &\leq \int_S \Expt[\big]{\abs{D_z F_j D_z F_k}^{3/2} \wedge \norm{D_z F}_{\bbR^d}^{3/2}}^{2/3} \Expt[\big]{\abs{D_z F_i}^3}^{1/3} \isp \mu(\di z),
\end{align*}
where we also used the contraction inequality
$\Expt{\abs{D_z L^{-1} F_i}^p} \leq \Expt{\abs{D_z F_i}^p}$ from
\cite[Lemma~3.4]{LastNorm}, which is valid for all $p \geq 1$ and
$z \in \bbR^d$. This completes the proof of
Theorem~\ref{thm:wasserstein-dist}.\hfill$\Box$

\section{Proof of Theorem \ref*{thm:MA-clt}}

\noindent In order to apply Theorem~\ref{thm:wasserstein-dist} we need
to ensure first of all that the processes $(X_t^i)$ can be represented
in terms of a Poisson process. Indeed, following \cite{RosiRepr} and
\cite{BassBerr} we can represent $X^i$ as the integral
\begin{equation*}
  X_t^i = \int_{\bbR^2} g_i(t - s) x \bigl(\eta(\di s, \di x) - \tau(g_i(t - s) x) \Di s \isp \nu(\di x) \bigr)
  + \tilde{b}_i,
\end{equation*}
with
\begin{equation*}
  \tilde{b}_i \coloneqq \int_{\bbR} \Bigl( g_i(s) b + \int_{\bbR} (\tau(x g_i(s)) - g_i(s) \tau(x)) \isp \nu(\di x) \Bigr) \Di s,
\end{equation*}
and where $\eta$ is a Poisson process on $\bbR^2$ with intensity
measure $\mu(\di s, \di x) \coloneqq \di s \isp \nu(\di x)$. Here,
$\nu$ is the Lévy measure of $L$, $b$ the shift parameter in the
characteristic triple for $L_1$ and $\tau$ is a truncation function,
cf. (8.3)--(8.4) in \cite{SatoLevy}. This representation is also the
formal starting point for the proof of Theorem~\ref{thm:MA-clt}.

In what follows, $C$ will denote a strictly positive constant whose
value might change from occasion to occasion, but only depends
$\underline{\alpha}$, $\overline{\alpha}$, $\beta$ and the sup-norms
of the partial derivatives of the function $f$. If $C$ depends additionally on the parameter $m$ we shall write $C(m)$ to highlight this dependence.

\subsection{Estimating the Malliavin derivative}

\noindent We start out by deriving simple estimates on the Malliavin
derivative. By definition of the terms $\gamma_1$, $\gamma_2$,
$\gamma_3$ introduced in Section~\ref{sec:Poincare} it is sufficient
to consider the Malliavin derivatives of each of the coordinates of
$f = (f_1, \ldots, f_d)$ separately. So, let
$i \in \Set{1, \ldots, d}$ and $z_j = (x_j, t_j) \in \bbR^2$ for
$j \in \Set{1, 2}$ be given. Define for $z = (x, t) \in \bbR^2$ the
vector $\delta_s(z)$, with $s \in \bbR$, as
\begin{equation}
  \label{eq:delta-def}
  \delta_s(z) \coloneqq x(g_1(s - t), \ldots, g_m(s - t)) \in \bbR^m.
\end{equation}
The mean value theorem together with the Cauchy--Schwarz inequality
and the assumption that $f_i \in \cC_b^2(\bbR^m, \bbR)$ then yield the
existence of a constant $C > 0$ such that
\begin{equation}
  \label{eq:malliavin-ineq:1}
  \begin{aligned}
    \abs{D_{z_1} f_i(X_s^1, \ldots, X_s^m)}
    &= \abs{f_i((X_s^1, \ldots, X_s^m) + \delta_s(z_1)) - f_i(X_s^1, \ldots, X_s^m)}
    \\
    &\leq C (1 \wedge \norm{\delta_s(z_1)}_{\bbR^m}).
  \end{aligned}
\end{equation}
Similarly, we deduce again by the mean value theorem and boundedness
of $f_i$ and its derivatives the following inequality for the iterated
Malliavin derivative:
\begin{align}
  \abs{D^2_{z_1, z_2} f_i(X_s^1, \ldots, X_s^m)}
  &= \bigl\lvert f_i((X_s^1, \ldots, X_s^m) + \delta_s(z_1) + \delta_s(z_2))
    \nonumber
  \\
  &\qquad - f_i((X_s^1, \ldots, X_s^m) + \delta_s(z_1))
    \label{eq:malliavin-ineq:2}
  \\
  &\qquad - f_i((X_s^1, \ldots, X_s^m) + \delta_s(z_2))
    + f_i(X_s^1, \ldots, X_s^m) \bigr\rvert
    \nonumber
  \\
  &\leq C (1 \wedge \norm{\delta_s(z_1)}_{\bbR^m}) (1 \wedge \norm{\delta_s(z_2)}_{\bbR^m}).
    \nonumber
\end{align}
Note that the estimates \eqref{eq:malliavin-ineq:1} and
\eqref{eq:malliavin-ineq:2} are purely deterministic and allow us to
replace stochastic terms by deterministic estimates of the underlying
kernels. This confirms in another context that many properties of
moving averages can be deduced solely from the driving spectral
density, see, for example, \cite{KonoSelf}.

\subsection{Analysing the asymptotic covariance matrix}

\noindent Define for each $k \in \bbZ$ and
$i, j \in \Set{1, \ldots, m}$ the integral
\begin{equation}
  \label{eq:rho-and-mu}
  \rho_{i, j, k} \coloneqq \int_{\bbR} \abs{g_i(x) g_j(x + k)}^{\beta/2} \Di x.
\end{equation}
Now, $\rho_{i, j, k}$ is closely related to the asymptotic
covariances, which motivates the following technical lemma, which in
turn leads to our assumption that $\alpha_i \beta > 2$ for any
$i \in \Set{1, \ldots, m}$.

\begin{lemma}
  \label{lem:cov-ineq}
  There is a constant $C > 0$ which only depends $\overline{\alpha}$
  and $\beta$ such that for all $k \in \bbZ$ with $\abs{k} \geq 2$ and
  any $i, j \in \Set{1, \ldots, m}$,
  \begin{equation*}
    \rho_{i, j, k} \leq C \abs{k}^{-(\alpha_i \wedge \alpha_j) \beta / 2}.
  \end{equation*}
\end{lemma}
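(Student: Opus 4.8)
The plan is to estimate the integral $\rho_{i,j,k} = \int_{\bbR} \abs{g_i(x) g_j(x+k)}^{\beta/2} \Di x$ directly using the power-law bound \eqref{eq:kernel-ass}, after splitting the real line into regions according to whether $\abs{x}$ and $\abs{x+k}$ are small (i.e.\ $< 1$) or large (i.e.\ $\geq 1$). Since $\abs{k} \geq 2$, the two ``small'' intervals $[-1,1]$ and $[-k-1,-k+1]$ are disjoint, so only one of the two factors can be in its near-zero regime at a time. This naturally leads to three contributions to the integral: a piece near $x = 0$ where $g_i$ is in its $\abs{x}^{\kappa_i}$ regime and $g_j(x+k)$ is in its decay regime, a symmetric piece near $x = -k$, and the bulk where both kernels are in their decay regimes. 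I would treat each separately.

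For the bulk region, where $\abs{x} \geq 1$ and $\abs{x+k} \geq 1$, the integrand is bounded by $K^\beta \abs{x}^{-\alpha_i \beta/2} \abs{x+k}^{-\alpha_j \beta/2}$. The key step is the standard convolution-type estimate: for exponents $p,q$ with $p,q < 1$ (here $p = \alpha_i\beta/2$, $q = \alpha_j\beta/2$, both positive but possibly $<1$ or not) and $p+q > 1$, one has $\int \abs{x}^{-p}\abs{x+k}^{-q}\,\Di x \lesssim \abs{k}^{1-p-q}$ with a constant depending only on $p,q$. Note that $p+q > 1$ is guaranteed here because $\alpha_i\beta > 2$, hence $p = \alpha_i\beta/2 > 1$ already, so in fact each exponent alone exceeds $1$; this makes the estimate more robust and gives the bound $\abs{k}^{-(\alpha_i \wedge \alpha_j)\beta/2}$ after noting $1 - p - q \leq -\max(p,q) \leq -(\alpha_i\wedge\alpha_j)\beta/2$ once one checks $\min(p,q)\geq 1$ absorbs the leftover (when splitting the bulk around the midpoint $-k/2$, the factor far from each singularity contributes an $\abs{k}$ power that is controlled). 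For the two ``small'' pieces, say near $x=0$: there $\abs{g_i(x)}^{\beta/2} \leq K^{\beta/2}\abs{x}^{\kappa_i\beta/2}$, which is integrable on $[-1,1]$ precisely because $\kappa_i > -1/\beta$ (so $\kappa_i\beta/2 > -1/2 > -1$), while $\abs{g_j(x+k)}^{\beta/2} \leq K^{\beta/2}\abs{x+k}^{-\alpha_j\beta/2} \leq K^{\beta/2}(\abs{k}-1)^{-\alpha_j\beta/2} \leq C\abs{k}^{-\alpha_j\beta/2}$ uniformly on this interval since $\abs{x+k}\geq \abs{k}-1 \geq \abs{k}/2$. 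This contributes $C\abs{k}^{-\alpha_j\beta/2} \leq C\abs{k}^{-(\alpha_i\wedge\alpha_j)\beta/2}$, and the symmetric piece near $x = -k$ gives $C\abs{k}^{-\alpha_i\beta/2}$.

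The main obstacle is the bookkeeping in the bulk region: one must split $[-1,1]^c \cap [-k-1,-k+1]^c$ at a point (the midpoint $-k/2$ is convenient) and on each half bound the factor near the \emph{other} singularity by its value at distance $\sim\abs{k}$ from that singularity, then integrate the remaining single power. Here one must be slightly careful that $\alpha_i\beta/2$ and $\alpha_j\beta/2$ may each be any number greater than $1$, so the single-power integrals $\int_1^{\infty}\abs{x}^{-p}\Di x$ converge and are bounded by a constant depending only on $p \geq \overline\alpha\beta/2$'s lower bound — but since we only assumed $\alpha_i\beta>2$ pointwise, the uniform constant should be phrased in terms of $\overline\alpha$ and $\beta$ as claimed, using that $p = \alpha_i\beta/2 \in (1, \overline\alpha\beta/2]$. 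Collecting the three contributions and taking the worst exponent $-(\alpha_i\wedge\alpha_j)\beta/2$ yields the lemma; all constants depend only on $\overline\alpha$, $\beta$ (and $K$, which is a fixed constant of the model), as asserted.
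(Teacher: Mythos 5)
Your proposal is correct and follows essentially the same route as the paper: both split $\bbR$ into a neighbourhood of $0$, a neighbourhood of $-k$, and the bulk where both kernels are in their decay regime, use $\kappa_i > -1/\beta$ for integrability near each singularity while the other factor is of size $\abs{k}^{-\alpha\beta/2}$ there, and control the bulk by a midpoint-type split using that each exponent $\alpha_i\beta/2$ exceeds $1$ (the paper merely dresses the same computation up with the substitution $u = x/\abs{k}$). The only blemish is your initial appeal to the convolution estimate $\int_{\bbR} \abs{x}^{-p}\abs{x+k}^{-q} \Di x \lesssim \abs{k}^{1-p-q}$, which belongs to the $p, q < 1$ regime and does not describe the bulk here (where the correct order is $\abs{k}^{-\min(p,q)}$), but this is harmless since your subsequent midpoint argument supersedes it and delivers exactly the claimed bound.
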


\begin{proof}
  We split the integral in \eqref{eq:rho-and-mu} into the regions
  $(-1, 1)$ and $(-1, 1)\com \coloneqq \bbR \setminus (-1, 1)$. For
  the first region we note that for $x \in (-1, 1)$ one has that
  $x + k \notin (-1, 1)$, since we assumed that $\abs{k} \geq 2$. So
  by \eqref{eq:kernel-ass} and the substitution $u = x \abs{k}^{-1}$,
  we have
  \begin{align*}
    \int_{-1}^{1} \abs{g_i(x) g_j(x + k)}^{\beta / 2} \Di x
    &\leq C \int_{-1}^{1} \abs{x}^{\beta \kappa_i / 2} \abs{x + k}^{-\alpha_j \beta / 2} \Di x
    \\
    &= C \abs{k}^{\beta \kappa_i / 2 - \alpha_j \beta / 2 + 1} \int_{-\frac{1}{\abs{k}}}^{\frac{1}{\abs{k}}}
      \abs{u}^{\beta \kappa_i / 2} \abs{u + \tfrac{k}{\abs{k}}}^{-\alpha_j \beta / 2} \Di u.
  \end{align*}
  The second factor in the integral is bounded since the reverse
  triangle inequality shows that for $e \in \Set{-1, +1}$ and
  $u \in (-\abs{k}^{-1}, \abs{k}^{-1})$,
  \begin{equation*}
    \abs{u + e} \geq \abs{\abs{u} - \abs{-e}}
    = 1 - \abs{u}
    \geq 1 - \abs{k}^{-1}
    \geq \tfrac{1}{2}.
  \end{equation*}
  Hence,
  \begin{equation}
    \label{eq:rho-1}
    \begin{aligned}
      \int_{-1}^{1} \abs{g_i(x) g_j(x + k)}^{\beta / 2} \Di x
      &\leq  C \abs{k}^{\beta \kappa_i / 2 - \alpha_j \beta / 2 + 1}
        \int_{0}^{ \frac{1}{\abs{k}}} u^{\beta \kappa_i / 2} \Di u
      = C \abs{k}^{-\alpha_j \beta / 2},
    \end{aligned}
  \end{equation}
  where we used that $\kappa > -1/\beta$.
  The
  second integral region, $(-1, 1)\com$, is further split into the
  regions $(-1, 1)\com \cap (-k - 1, -k + 1) = (-k - 1, -k + 1)$ and
  $(-1, 1)\com \cap (-k - 1, -k + 1)\com$. For the first we obtain,
  after a translation, a term which is almost identical to the first
  integration region:
  \begin{equation}
    \label{eq:rho-2}
    \begin{aligned}
      \int_{-k - 1}^{ -k + 1} \abs{g_i(x) g_j(x + k)}^{\beta / 2} \Di u
      &\leq C \int_{-k - 1}^{ -k + 1} \abs{x}^{-\alpha_i \beta / 2} \abs{x + k}^{\beta \kappa_j / 2} \Di x
      \\
      &= C \int_{-1}^{1} \abs{x - k}^{-\alpha_i \beta / 2} \abs{x}^{\beta \kappa_j /2} \Di x
      \\
      &\leq C \abs{k}^{-\alpha_i \beta /2}.
    \end{aligned}
  \end{equation}
  The last term is more intricate. We start by writing
  \begin{align*}
    \MoveEqLeft \int_{(-1, 1)\com \cap (-k - 1, -k + 1)\com} \abs{g_i(x) g_j(x + k)}^{\beta / 2} \Di x
    \\
    &\leq C \int_{(-1, 1)\com \cap (-k - 1, -k + 1)\com} \abs{x}^{-\alpha_i\beta /2} \abs{x + k}^{-\alpha_j\beta / 2} \Di x
    \\
    &= C \abs{k}^{1 - (\alpha_i + \alpha_j) \beta / 2}
      \int_{(-\frac{1}{\abs{k}}, \frac{1}{\abs{k}})\com \cap (\frac{-k - 1}{\abs{k}}, \frac{-k + 1}{\abs{k}})\com}
      \abs{u}^{-\alpha_i \beta / 2} \abs{u + \tfrac{k}{\abs{k}}}^{-\alpha_j \beta / 2} \Di u.
  \end{align*}
  This shows two competing effects, but since they do not occur
  simultaneously we can isolate each by splitting further into the
  sub-regions $(-1 / 2, 1 / 2)\com$ and $(-1 / 2, 1 / 2)$, respectively. This yields
  \begin{equation}
    \label{eq:rho-3}
    \begin{aligned}
      \MoveEqLeft \int_{(-\frac{1}{\abs{k}}, \frac{1}{\abs{k}})\com \cap (\frac{-k - 1}{\abs{k}}, \frac{-k + 1}{\abs{k}})\com} \abs{u}^{-\alpha_i \beta / 2} \abs{u + \tfrac{k}{\abs{k}}}^{-\alpha_j \beta / 2} \Di u
      \\
      &= \int_{(\frac{-k - 1}{\abs{k}}, \frac{-k + 1}{\abs{k}})\com \cap (-\frac{1}{2}, \frac{1}{2})\com} \abs{u}^{-\alpha_i \beta / 2} \abs{u + \tfrac{k}{\abs{k}}}^{-\alpha_j \beta / 2} \Di u
      \\
      &\quad + \int_{(-\frac{1}{\abs{k}}, \frac{1}{\abs{k}})\com \cap (-\frac{1}{2}, \frac{1}{2})} \abs{u}^{-\alpha_i \beta / 2} \abs{u + \tfrac{k}{\abs{k}}}^{-\alpha_j \beta / 2} \Di u
      \\
      &\leq C \int_{(\frac{-k - 1}{\abs{k}}, \frac{-k + 1}{\abs{k}})\com}
        \abs{u + \tfrac{k}{\abs{k}}}^{-\alpha_j \beta / 2} \Di u
      \\
      &\quad+ C \int_{(-\frac{1}{\abs{k}}, \frac{1}{\abs{k}})\com} \abs{u}^{-\alpha_i \beta / 2} \Di u
      \\
      &\leq C \int_{\frac{1}{\abs{k}}}^{\infty} u^{-(\alpha_i \wedge \alpha_j) \beta / 2} \Di u
      \\
      &= C \abs{k}^{(\alpha_i \wedge \alpha_j) \beta/2 - 1},
    \end{aligned}
  \end{equation}
  where to obtain the first inequality we used again the reverse
  triangle inequality. Combining \eqref{eq:rho-1}--\eqref{eq:rho-3}
  completes the proof.\qed
\end{proof}

\begin{proposition}
  \label{prop:cov-conv}
  The series defining $\Sigma_{i, j}^2$ in \eqref{eq:asymp-cov} is
  absolutely convergent and we have that $\Sigma_n^2 \to \Sigma^2$, as
  $n \to \infty$. In particular, $\Sigma_n \to \Sigma$.
\end{proposition}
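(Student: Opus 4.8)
The plan is to reduce the proposition to an absolute-summability estimate for the scalar covariances
\[
  r_{i,j}(k) \coloneqq \cov(f_i(X_k^1, \ldots, X_k^m), f_j(X_0^1, \ldots, X_0^m)), \qquad k \in \bbZ.
\]
Since $L$ has stationary increments, the $\bbR^m$-valued process $(X_t^1, \ldots, X_t^m)_{t \in \bbR}$ is stationary (shifting $t$ just shifts the time coordinate of $\eta$, under which $\mu(\di s, \di x) = \di s \isp \nu(\di x)$ is invariant), so $\cov(f_i(X_s), f_j(X_t))$ depends only on $s - t$, equals $r_{i,j}(s - t)$, and satisfies $r_{i,j}(-k) = \cov(f_i(X_0), f_j(X_k))$. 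Counting the pairs $(s, t) \in \Set{1, \ldots, n}^2$ with $s - t = k$ then gives
\[
  \bigl(\Sigma_n^2\bigr)_{i,j} = \frac{1}{n} \sum_{s, t = 1}^{n} r_{i,j}(s - t) = \sum_{\abs{k} < n} \Bigl(1 - \frac{\abs{k}}{n}\Bigr) r_{i,j}(k),
\]
so once $\sum_{k \in \bbZ} \abs{r_{i,j}(k)} < \infty$ is established, dominated convergence on $\bbZ$ with dominating sequence $(\abs{r_{i,j}(k)})_k$ yields $(\Sigma_n^2)_{i,j} \to \sum_{k \in \bbZ} r_{i,j}(k)$; splitting this series into the ranges $k \geq 0$ and $k \leq -1$ and inserting $r_{i,j}(-k) = \cov(f_i(X_0), f_j(X_k))$ recovers precisely the right-hand side of \eqref{eq:asymp-cov}, whose asserted absolute convergence is the same statement $\sum_k \abs{r_{i,j}(k)} < \infty$.

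The substance of the proof is thus a bound on $\abs{r_{i,j}(k)}$ that decays fast enough in $k$. I would first invoke the Malliavin covariance inequality
\[
  \abs{\cov(F, G)} = \abs[\big]{\Expt{\iprod{DF, -DL^{-1}G}_{\cL^2(\mu)}}} \leq \int_S \Expt{(D_z F)^2}^{1/2} \Expt{(D_z G)^2}^{1/2} \isp \mu(\di z),
\]
valid for $F, G \in \cL^2_\eta(\bbP)$ with $DF, DG \in \cL^2(\bbP \otimes \mu)$, which follows from the Malliavin rules \ref{it:op-rules:1}--\ref{it:op-rules:3} (applied after centring $G$) together with the Cauchy--Schwarz inequality and the contraction bound $\Expt{(D_z L^{-1} G)^2} \leq \Expt{(D_z G)^2}$ of \cite[Lemma~3.4]{LastNorm}---this is the same computation already performed for the term $T_2$ in the proof of Theorem~\ref{thm:wasserstein-dist}. (The membership $D f_l(X_s) \in \cL^2(\bbP \otimes \mu)$ needed to apply it follows from \eqref{eq:malliavin-ineq:1}, the tail bound \eqref{eq:stable-levy-measure}, and $\int_{\bbR} \abs{g_l}^{\beta} < \infty$, the last of which is guaranteed by \eqref{eq:kernel-ass} together with $\alpha_l \beta > 1$ and $\kappa_l > -1/\beta$.) Applying this to $F = f_i(X_k^1, \ldots, X_k^m)$ and $G = f_j(X_0^1, \ldots, X_0^m)$ and plugging in the \emph{deterministic} estimate \eqref{eq:malliavin-ineq:1}, namely $\abs{D_z f_i(X_s^1, \ldots, X_s^m)} \leq C(1 \wedge \norm{\delta_s(z)}_{\bbR^m})$ with $\delta_s(z) = x(g_1(s - t), \ldots, g_m(s - t))$ for $z = (x, t)$, the expectations drop out and one is left with
\[
  \abs{r_{i,j}(k)} \leq C \int_{\bbR} \int_{\bbR} \bigl(1 \wedge \abs{x} G_k(t)\bigr) \bigl(1 \wedge \abs{x} G_0(t)\bigr) \isp \nu(\di x) \Di t, \qquad G_s(t) \coloneqq \Bigl(\sum_{l = 1}^{m} g_l(s - t)^2\Bigr)^{1/2}.
\]

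Next I would carry out the two integrations. For the inner ($x$-)integral the tail bound $\abs{w(x)} \leq C \abs{x}^{-1-\beta}$ reduces matters to $\int_0^{\infty} (1 \wedge xa)(1 \wedge xb) x^{-1-\beta} \Di x$, which an elementary case split (over $x \leq (\max\Set{a, b})^{-1}$, $(\max\Set{a, b})^{-1} \leq x \leq (\min\Set{a, b})^{-1}$, and $x \geq (\min\Set{a, b})^{-1}$) bounds by $C(ab)^{\beta/2}$ uniformly in $a, b \geq 0$ and $\beta \in (0, 2)$ (the borderline $\beta = 1$ produces a logarithmic factor, still absorbed into $C(ab)^{\beta/2}$ because $r^{-1/2} \log r$ is bounded on $[1, \infty)$). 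This gives $\abs{r_{i,j}(k)} \leq C \int_{\bbR} (G_k(t) G_0(t))^{\beta/2} \Di t$, and since $\beta/2 \leq 1$, using $(\sum_l a_l^2)^{1/2} \leq \sum_l \abs{a_l}$ followed by subadditivity of $u \mapsto u^{\beta/2}$ and then the substitution $x = -t$ yields
\[
  \abs{r_{i,j}(k)} \leq C \sum_{l, l' = 1}^{m} \int_{\bbR} \abs{g_{l'}(x) g_l(x + k)}^{\beta/2} \Di x = C \sum_{l, l' = 1}^{m} \rho_{l', l, k}.
\]
For $\abs{k} \geq 2$, Lemma~\ref{lem:cov-ineq} bounds the right-hand side by $C m^2 \abs{k}^{-\underline{\alpha}\beta/2}$, and since $\underline{\alpha}\beta > 2$ this exponent exceeds $1$; together with the trivial finiteness of $r_{i,j}(0)$ and $r_{i,j}(\pm 1)$ (the $f_l$ are bounded) this establishes $\sum_{k \in \bbZ} \abs{r_{i,j}(k)} < \infty$, as needed. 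Finally $\Sigma_n^2 \to \Sigma^2$ entrywise, hence in any matrix norm; both matrices are positive semi-definite (a covariance matrix, respectively a limit of such), and the positive semi-definite square root is continuous on the cone of positive semi-definite matrices---every subsequential limit $B$ of the bounded sequence $(\Sigma_n)$ is positive semi-definite with $B^2 = \Sigma^2$, hence equals $\Sigma$---so $\Sigma_n \to \Sigma$.

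I expect the covariance estimate to be the only genuine obstacle: one must verify that the Malliavin covariance inequality really applies here and, more delicately, dominate the multivariate norms $\norm{\delta_s(z)}_{\bbR^m}$ so that the bound collapses onto the single-kernel integrals $\rho_{l', l, k}$ that are controlled by Lemma~\ref{lem:cov-ineq}; the rest is the Cesàro-averaging identity, dominated convergence, and continuity of the matrix square root.
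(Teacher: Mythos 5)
Your proposal is correct and follows essentially the same route as the paper: reduce to absolute summability of the covariances via the inequality $\abs{\cov(F,G)} \leq \int_S \Expt{(D_zF)^2}^{1/2}\Expt{(D_zG)^2}^{1/2}\isp\mu(\di z)$, insert the deterministic bound \eqref{eq:malliavin-ineq:1}, integrate out $x$ to land on the quantities $\rho_{l',l,k}$ controlled by Lemma~\ref{lem:cov-ineq}, and finish with the Ces\`aro identity, dominated convergence and continuity of the positive semi-definite square root. The only (immaterial) divergence is in how the covariance inequality is justified: you use the integration-by-parts identity $\cov(F,G)=\Expt{\iprod{DF,-DL^{-1}G}_{\cL^2(\mu)}}$ together with the contraction bound of \cite[Lemma~3.4]{LastNorm}, whereas the paper derives it by applying the classical Poincar\'e inequality to $F+G$ and $F-G$ and subtracting.
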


\begin{proof}
  First, we prove that the series in \eqref{eq:asymp-cov} converges
  absolutely. By symmetry it is enough to show that
  \begin{equation*}
    \sum_{s = 1}^{\infty} \abs{\cov(f_i(X_s^1, \ldots, X_s^m), f_j(X_0^1, \ldots, X_0^m))} < \infty
    \quad \text{for all $i, j \in \Set{1, \ldots, d}$.}
  \end{equation*}
  To this end, we recall that for two general functionals
  $F, G \in \cL_\eta^2(\bbP)$ of a Poisson process $\eta$ in a
  measurable space $(S, \cS)$ with intensity measure $\mu$ one has the
  inequality
  \begin{equation*}
      \cov(F,G) \leq \int_S \Expt[\big]{(D_zF)^2}^{1 / 2} \Expt[\big]{(D_z G)^2}^{1 / 2} \isp \mu(\di z).
  \end{equation*}
  In fact, this readily follows by applying two times the classical
  Poincaré inequality \cite[Equation~(1.8)]{LastPois} to the Poisson
  functionals $F + G$ and $F - G$, subtracting both relations from
  each other and finally applying the Cauchy--Schwartz inequality. In
  our situation this yields, together with the triangle inequality,
  \begin{align*}
    \MoveEqLeft\abs{\cov(f_i(X_s^1, \ldots, X_s^m), f_j(X_0^1, \ldots, X_0^m))}
    \\
    &\leq \int_{\bbR^2} \Expt[\big]{\abs{D_z f_i(X_s^1, \ldots, X_s^m)}^2 }^{1/2} \Expt[\big]{ \abs{D_z f_j(X_0^1, \ldots, X_0^m)}^2 }^{1/2}  \isp \mu(\di z)
    \\
    &\leq C \int_{\bbR} \Bigl( \int_{\bbR} (1 \wedge \abs{x}^2 \norm{(g_{\ell}(s - t))_{\ell = 1}^{m}}_{\bbR^m}
      \norm{(g_{\ell}(-t))_{\ell = 1}^{m}}_{\bbR^m}) \abs{x}^{-1 - \beta}  \Di x \Bigr) \Di t
    \\
    &= C \int_{\bbR} \norm{(g_\ell(s - t))_{\ell = 1}^{m}}_{\bbR^m}^{\beta/2}
      \norm{(g_\ell(- t))_{\ell = 1}^{m}}_{\bbR^m}^{\beta/2} \Di t
    \\
    &\leq C \sum_{k, \ell = 1}^{m} \int_{\bbR} \abs{g_\ell(s - t) g_k( - t)}^{\beta/2} \Di t
    \\
    &= C \sum_{k, \ell = 1}^{m} \rho_{k, \ell, s}
    \\
    & \leq C(m) s^{-\underline{\alpha} \beta / 2},
  \end{align*}
  where the first equality follows by the splitting the integral with
  respect to $x$ into the regions
  $(0, (\norm{(g_{\ell}(s - t))_{\ell = 1}^{m}}_{\bbR^m}
  \norm{(g_{\ell}(-t))_{\ell = 1}^{m}}_{\bbR^m})^{-1 / 2}]$ and
  $((\norm{(g_{\ell}(s - t))_{\ell = 1}^{m}}_{\bbR^m}
  \norm{(g_{\ell}(-t))_{\ell = 1}^{m}}_{\bbR^m})^{-1 / 2}, \infty)$,
  and the last inequality follows from Lemma~\ref{lem:cov-ineq}. Since
  $\underline{\alpha} \beta > 2$ by assumption the series in
  \eqref{eq:asymp-cov} converges absolutely. To deduce the convergence
  $\Sigma_n^2 \to \Sigma^2$ we use the stationarity of the sequence
  $(X_t^1, \ldots, X_t^m)$, $t \in \bbR$, to see that for any
  $i, j \in \Set{1, \ldots, d}$,
  \begin{align*}
    &\cov(V_n^i(X; f), V_n^j(X; f))
    \\
    &= n^{-1} \sum_{s, t = 1}^{n} \cov(f_i(X_s^1, \ldots, X_s^m), f_j(X_t^1, \ldots, X_t^m))
    \\
    &= n^{-1} \sum_{\substack{s, t = 1 \\ s \geq t}}^{n} \cov(f_i(X_{s - t}^1, \ldots, X_{s - t}^m), f_j(X_0^1, \ldots, X_0^m))
    \\
    &\qquad + n^{-1} \sum_{\substack{s, t = 1 \\ s < t}}^{n} \cov(f_i(X_0^1, \ldots, X_0^m), f_j(X_{t - s}^1, \ldots, X_{t - s}^m))
    \\
    &= \sum_{k = 0}^{n - 1} (1 - \tfrac{k}{n}) \cov(f_i(X_k^1, \ldots, X_k^m), f_j(X_0^1, \ldots, X_0^m))
    \\
    &\qquad + \sum_{k = 1}^{n - 1} (1 - \tfrac{k}{n}) \cov(f_i(X_0^1, \ldots, X_0^m), f_j(X_k^1, \ldots, X_k^m))
      \longrightarrow \Sigma_{i, j}^2,
  \end{align*}
  as $n \to \infty$, where the convergence follows by Lebesgue's
  dominated convergence theorem together with the absolute convergence
  of the series defining the limit $\Sigma_{i, j}^2$. Finally, the
  last claim simply follows by continuity of the square root.\qed
\end{proof}

\subsection{Bounding $d_3(V_n, Y)$}

\noindent Recall for $i, k \in \Set{1, \ldots, d}$ the definition of
the quantities $\gamma_{1}(F_i, F_k)$ and $\gamma_{2}(F_i, F_k)$ from
Section~\ref{sec:Poincare}, which are applied with $F_i = V^i_n(X; f)$
and $F_k = V^k_n(X;f)$. According to
Theorem~\ref{thm:wasserstein-dist} we have that for any $n \in \bbN$,
\begin{equation*}
  d_3(V_n(X; f), Y)
  \leq \sum_{i, k = 1}^{d} (\gamma_{1}(F_i,F_k) + \gamma_{2}(F_i,F_k))
  + \gamma_3,
\end{equation*}
where $\gamma_3$ is defined at \eqref{eq:gamma3}. We consider each of
these terms separately in the following three lemmas. Let us point to
the fact that the sum will converge at a speed of order $1/\sqrt{n}$,
whereas the $\gamma_3$-term will generally converge at a lower speed,
depending on the parameters $\underline{\alpha}$ and $\beta$. It is
also this last term that requires the stronger assumption
\eqref{eq:kernel-ass} rather than just
$\sum_{u = 0}^{\infty} \rho_{i, j, u} < \infty$ for all
$i, j \in \Set{1, \ldots, m}$. Indeed, as a product, in $\gamma_3$ we
carefully have to distinguish between small and large values, where
the latter are non-negligible for heavy-tailed moving averages.

\begin{lemma}
  \label{lem:gamma1}
  There exists a constant $C > 0$ such that
  $\gamma_1(F_i,F_k) \leq C m^3 n^{-1/2}$ for any
  $i, k \in \Set{1, \ldots, d}$.
\end{lemma}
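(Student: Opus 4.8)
The plan is to reduce everything to a purely deterministic estimate, exploiting the linearity of the Malliavin derivative together with the deterministic bounds \eqref{eq:malliavin-ineq:1}--\eqref{eq:malliavin-ineq:2}. Recall that $\gamma_1$ is applied with $F_i = V_n^i(X;f)$ (appearing in the second-derivative legs) and $F_k = V_n^k(X;f)$ (appearing in the first-derivative legs). Abbreviate $h_s(z) \coloneqq 1 \wedge \norm{\delta_s(z)}_{\bbR^m}$. Since $D$ is linear and $F_i = n^{-1/2}\sum_{s=1}^n (f_i(X_s^1,\dots,X_s^m) - \Expt{f_i(X_0^1,\dots,X_0^m)})$, the bounds \eqref{eq:malliavin-ineq:1}--\eqref{eq:malliavin-ineq:2} give the \emph{deterministic} estimates $\abs{D_z F_k} \le C n^{-1/2}\sum_{s=1}^n h_s(z)$ and $\abs{D^2_{z_1,z_2} F_i} \le C n^{-1/2}\sum_{s=1}^n h_s(z_1) h_s(z_2)$. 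Because these pointwise bounds are non-random, both expectations in the integrand of $\gamma_1^2(F_i,F_k)$ may be replaced by the corresponding squared deterministic bounds (then taking square roots) at the cost of a constant, which leaves
\[ \gamma_1^2(F_i,F_k) \le \frac{C}{n^2}\sum_{s,t,u,v=1}^{n} \int_{S^3} h_s(z_1) h_u(z_1)\, h_t(z_2) h_v(z_2)\, h_s(z_3) h_t(z_3)\, \mu^3(\di z_1,\di z_2,\di z_3). \]
The triple integral factorizes as $I(s,u)\,I(t,v)\,I(s,t)$, where $I(a,b) \coloneqq \int_S h_a(z) h_b(z)\,\mu(\di z)$.

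Next I would prove the key estimate $I(a,b) \le C \sum_{p,q=1}^{m} \rho_{p,q,\,b-a}$ with $C$ depending only on $\beta$. Writing $z = (x,t)$ and $G_a(t) \coloneqq \norm{(g_\ell(a-t))_{\ell=1}^m}_{\bbR^m}$, one has $\norm{\delta_a(z)}_{\bbR^m} = \abs{x}\, G_a(t)$, so the elementary inequality $(1\wedge\abs{x}c_1)(1\wedge\abs{x}c_2) \le 1\wedge \abs{x}^2 c_1 c_2$, combined with $\nu(\di x)\le C\abs{x}^{-1-\beta}\di x$ (by \eqref{eq:stable-levy-measure}) and splitting the $x$-integral at $\abs{x} = c^{-1/2}$, yields $\int_\bbR (1\wedge\abs{x}^2 c)\abs{x}^{-1-\beta}\di x \le \tfrac{4}{\beta(2-\beta)} c^{\beta/2}$; this is the one place where $\beta\in(0,2)$ is used. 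Hence $I(a,b) \le C\int_\bbR (G_a(t) G_b(t))^{\beta/2}\,\di t$, and since $\beta/2\le 1$ the map $x\mapsto x^{\beta/2}$ is subadditive and $\norm{\argmrk}_{\bbR^m}\le\sum_\ell\abs{\argmrk_\ell}$, so $(G_a(t)G_b(t))^{\beta/2} \le \bigl(\sum_{p=1}^m\abs{g_p(a-t)}^{\beta/2}\bigr)\bigl(\sum_{q=1}^m\abs{g_q(b-t)}^{\beta/2}\bigr)$; integrating in $t$ and substituting recovers $\sum_{p,q}\rho_{p,q,\,b-a}$. (That $\rho_{p,q,r}$ is finite also for $\abs{r}<2$, which is outside the range of Lemma~\ref{lem:cov-ineq}, follows from Cauchy--Schwarz and $\rho_{p,p,0} = \int_\bbR\abs{g_p}^\beta<\infty$, which holds since $\alpha_p\beta > 1$ and $\kappa_p\beta > -1$.)

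Finally I would assemble the bound. By Lemma~\ref{lem:cov-ineq} together with the previous finiteness remark, $\Lambda \coloneqq \sup_{1\le p,q\le m}\sum_{r\in\bbZ}\rho_{p,q,r}$ is finite and depends only on $\underline{\alpha}$, $\overline{\alpha}$, $\beta$; the hypothesis $\underline{\alpha}\beta > 2$ is exactly what makes $\sum_r \abs{r}^{-\underline{\alpha}\beta/2}$ converge. Consequently $\sum_{u=1}^n I(s,u) \le C m^2 \Lambda$ uniformly in $s$ and $\sum_{s,t=1}^n I(s,t) \le C m^2 \Lambda\, n$. Inserting these into the factorized sum,
\[ \sum_{s,t,u,v=1}^{n} I(s,u)\,I(t,v)\,I(s,t) = \sum_{s,t=1}^{n} I(s,t) \Bigl(\sum_{u=1}^n I(s,u)\Bigr)\Bigl(\sum_{v=1}^n I(t,v)\Bigr) \le (C m^2\Lambda)^2 \sum_{s,t=1}^{n} I(s,t) \le C m^6 n, \]
so $\gamma_1^2(F_i,F_k) \le C m^6 / n$ and thus $\gamma_1(F_i,F_k) \le C m^3 n^{-1/2} \le C m^6 n^{-1/2}$, which is the claim (in fact with a better power of $m$). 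I expect the main obstacle to be not any single step but the bookkeeping: carrying out the factorization of the sixfold sum and threefold integral correctly, establishing $I(a,b)\le C\sum_{p,q}\rho_{p,q,\,b-a}$ with a constant free of $m$ and $n$, and tracking throughout that the final constant depends only on the permitted quantities.
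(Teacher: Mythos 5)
Your proposal is correct and follows essentially the same route as the paper's proof: the deterministic bounds \eqref{eq:malliavin-ineq:1}--\eqref{eq:malliavin-ineq:2}, factorization of the $\mu^3$-integral over the three variables, the identity $\int_{\bbR}(1\wedge x^2 y)\abs{x}^{-1-\beta}\,\di x = C y^{\beta/2}$ to reduce everything to the quantities $\rho_{p,q,r}$, and summability of $\sum_{r}\rho_{p,q,r}$ via Lemma~\ref{lem:cov-ineq} (plus finiteness for small $\abs{r}$), ending with $\gamma_1^2 \leq C m^6/n$. The only difference is cosmetic bookkeeping: you package the single-variable integral as $I(a,b)$ before expanding the Euclidean norms into coordinates, whereas the paper expands into the indices $j_1,\ldots,j_6$ first and integrates afterwards.
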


\begin{proof}
  To simplify the notation put $V_n^i \coloneqq V_n^i(X; f)$ and
  recall that
  \begin{align*}
    \gamma_{1}^2(F_i,F_k)&= 3 \int_{(\bbR^2)^3} \Expt[\big]{(D_{z_1, z_3}^2 V_n^i)^2 (D_{z_2, z_3}^2 V_n^i)^2}^{1/2}
    \\
                         &\hspace{2cm}\times\Expt[\big]{(D_{z_1} V_n^k)^2 (D_{z_2} V_n^k)^2}^{1/2} \isp \mu^3(\di z_1, \di z_2, \di z_3).
  \end{align*}
  If $z_i = (x_i, t_i) \in \bbR^2$ for $i \in \Set{1, 2, 3}$, the
  integrand can be bounded using \eqref{eq:malliavin-ineq:1} and
  \eqref{eq:malliavin-ineq:2} as follows:
  \begin{align*}
    \MoveEqLeft[0] \Expt[\big]{(D_{z_1, z_3}^2 V_n^i)^2 (D_{z_2, z_3}^2 V_n^i)^2}^{1/2} \Expt[\big]{(D_{z_1} V_n^k)^2 (D_{z_2} V_n^k)^2}^{1/2}
    \\
    &\leq \frac{C}{n^2} \Bigl(\sum_{s_1 = 1}^{n} (1 \wedge \norm{\delta_{s_1}(z_1)}_{\bbR^m}) (1 \wedge \norm{\delta_{s_1}(z_3)}_{\bbR^m}) \Bigr)
    \\
    &\hspace{1.5cm}\times \Bigl(\sum_{s_2 = 1}^{n} (1 \wedge \norm{\delta_{s_2}(z_2)}_{\bbR^m}) (1 \wedge \norm{\delta_{s_2}(z_3)}_{\bbR^m}) \Bigr)
    \\
    &\hspace{1.5cm} \times \Bigl(\sum_{s_3 = 1}^{n} (1 \wedge \norm{\delta_{s_3}(z_1)}_{\bbR^m}) \Bigr)
      \Bigl(\sum_{s_4 = 1}^{n} (1 \wedge \norm{\delta_{s_4}(z_2)}_{\bbR^m})  \Bigr)
    \\
    &\leq \frac{C}{n^2} \sum_{s_1, \ldots, s_4 = 1}^{n} \bigl[ (1 \wedge (\norm{\delta_{s_1}(z_1)}_{\bbR^m} \norm{\delta_{s_3}(z_1)}_{\bbR^m})) (1 \wedge (\norm{\delta_{s_2}(z_2)}_{\bbR^m} \norm{\delta_{s_4}(z_2)}_{\bbR^m}))
    \\
    &\hspace{1.5cm} \times (1 \wedge (\norm{\delta_{s_1}(z_3)}_{\bbR^m} \norm{\delta_{s_2}(z_3)}_{\bbR^m})) \bigr]
    \\
    &\leq \frac{C}{n^2} \sum_{s_1, \ldots, s_4 = 1}^{n} \sum_{j_1, \ldots, j_6 = 1}^{m} (1 \wedge (x_1^2 \abs{g_{j_1}(s_1 - t_1) g_{j_2}(s_3 - t_1)}))
    \\
    &\hspace{1.5cm} \times (1 \wedge (x_2^2 \abs{g_{j_3}(s_2 - t_2) g_{j_4}(s_4 - t_2)})) (1 \wedge (x_3^2 \abs{g_{j_5}(s_1 - t_3) g_{j_6}(s_2 - t_3)})).
  \end{align*}
  Using the substitution $u_i = x_i^2 y_i$ for $y_i > 0$,
  $i \in \Set{1, 2, 3}$, one easily verifies the relation
  \begin{equation*}
    \int_{\bbR^3} (1 \wedge (x_1^2 y_1)) (1 \wedge (x_2^2 y_2)) (1 \wedge (x_3^2 y_3)) \abs{x_1 x_2 x_3}^{-1 - \beta} \Di x_1 \Di x_2 \Di x_3 = C y_1^{\beta / 2} y_2^{\beta / 2} y_3^{\beta / 2},
  \end{equation*}
  for $\beta \in (0, 2)$. This yields the bound
  \begin{align*}
    \gamma_{1}^2(F_i,F_k) &\leq \frac{C}{n^2} \sum_{j_1, \ldots, j_6 = 1}^{m} \sum_{s_1, \ldots, s_4 = 1}^{n} \Bigl( \int_{\bbR} \abs{g_{j_1}(s_1 - t_1) g_{j_2}(s_3 - t_1)}^{\beta / 2} \Di t_1
    \\
                          &\hspace{1.5cm} \times \int_{\bbR} \abs{g_{j_3}(s_2 - t_2) g_{j_4}(s_4 - t_2)}^{\beta / 2} \Di t_2
    \\
                          &\hspace{1.5cm} \times \int_{\bbR} \abs{g_{j_5}(s_1 - t_3) g_{j_6}(s_2 - t_3)}^{\beta / 2} \Di t_3 \Bigr)
    \\
                          &= \frac{C}{n^2} \sum_{j_1, \ldots, j_6 = 1}^{m} \sum_{s_1, \ldots, s_4 = 1}^{n} \rho_{j_1, j_2, s_3 - s_1} \rho_{j_3, j_4, s_4 - s_2} \rho_{j_5, j_6, s_2 - s_1}
    \\
                          &\leq \frac{C}{n} \sum_{j_1, \ldots, j_6 = 1}^{m} \Bigl(\sum_{u = -n}^{n} \rho_{j_1, j_2, u} \Bigr) \Bigl(\sum_{u = -n}^{n} \rho_{j_3, j_4, u} \Bigr) \Bigl(\sum_{u = -n}^{n} \rho_{j_5, j_6, u} \Bigr)
                            \leq \frac{C m^6}{n},
  \end{align*}
  where the penultimate inequality follows by substitution and the
  last inequality is due to Lemma~\ref{lem:cov-ineq}, and where we
  used that $\sum_{u = 0}^{\infty} \rho_{j, \ell, u} < \infty$ for all
  $j, \ell \in \Set{1, \ldots, m}$.\qed
\end{proof}

\begin{lemma}
  \label{lem:gamma2}
  There exists a constant $C > 0$ such that
  $\gamma_2(F_i, F_k) \leq C m^4 n^{-1 / 2}$ for all
  $i, k \in \Set{1, \ldots, d}$ and $n \in \bbN$.
\end{lemma}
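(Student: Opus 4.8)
The plan is to reuse the argument of Lemma~\ref{lem:gamma1} almost verbatim, applied again with $F_i = V_n^i(X; f)$ and $F_k = V_n^k(X; f)$. The single structural novelty is that in $\gamma_2^2(F_i, F_k)$ \emph{both} tensor slots carry the second iterated Malliavin derivative, so that every factor arising after expansion is controlled by the deterministic estimate \eqref{eq:malliavin-ineq:2} alone. Concretely, writing $V_n^i = V_n^i(X; f)$, using $D^2_{z, z'} V_n^i = n^{-1/2} \sum_{s = 1}^{n} D^2_{z, z'} f_i(X_s^1, \ldots, X_s^m)$, the triangle inequality, \eqref{eq:malliavin-ineq:2} and the fact that the bound so obtained is purely deterministic, I would first get, for $z_j = (x_j, t_j) \in \bbR^2$ and $\delta_s$ as in \eqref{eq:delta-def},
\begin{align*}
  \MoveEqLeft \Expt[\big]{(D^2_{z_1, z_3} V_n^i)^2 (D^2_{z_2, z_3} V_n^i)^2}^{1/2} \Expt[\big]{(D^2_{z_1, z_3} V_n^k)^2 (D^2_{z_2, z_3} V_n^k)^2}^{1/2}
  \\
  &\leq \frac{C}{n^2} \sum_{s_1, \ldots, s_4 = 1}^{n} (1 \wedge \norm{\delta_{s_1}(z_1)}_{\bbR^m})(1 \wedge \norm{\delta_{s_3}(z_1)}_{\bbR^m})
  \\
  &\qquad\qquad \times (1 \wedge \norm{\delta_{s_2}(z_2)}_{\bbR^m})(1 \wedge \norm{\delta_{s_4}(z_2)}_{\bbR^m}) \prod_{\ell = 1}^{4} (1 \wedge \norm{\delta_{s_\ell}(z_3)}_{\bbR^m}).
\end{align*}
Compared with the analogous estimate in the proof of Lemma~\ref{lem:gamma1}, the only change is that four factors rather than two are attached to the variable $z_3$.

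These four $z_3$-factors are the only place where a little care is needed, and the cheap way out is to use that each of them is at most $1$: I would discard the two coming from $V_n^k$ and retain only $(1 \wedge \norm{\delta_{s_1}(z_3)}_{\bbR^m})(1 \wedge \norm{\delta_{s_2}(z_3)}_{\bbR^m})$. After this the computation is literally the one carried out for Lemma~\ref{lem:gamma1}: I group the remaining six factors according to which of $z_1, z_2, z_3$ they depend on, use $(1 \wedge u)(1 \wedge v) \leq 1 \wedge uv$, the definition \eqref{eq:delta-def} of $\delta_s$, the bounds $\norm{u}_{\bbR^m}\norm{v}_{\bbR^m} \leq \sum_{j, j'} \abs{u_j}\abs{v_{j'}}$ and $1 \wedge \sum_r a_r \leq \sum_r (1 \wedge a_r)$, then integrate out $x_1, x_2, x_3$ against $\nu$ by means of \eqref{eq:stable-levy-measure} and the identity $\int_{\bbR} (1 \wedge x^2 y)\abs{x}^{-1 - \beta}\Di x = C y^{\beta / 2}$ (valid for $\beta \in (0, 2)$), and finally integrate out $t_1, t_2, t_3$; after translation the remaining one-dimensional integrals turn into the quantities $\rho_{\cdot, \cdot, \cdot}$ of \eqref{eq:rho-and-mu}, leaving
\begin{equation*}
  \gamma_2^2(F_i, F_k) \leq \frac{C}{n^2} \sum_{j_1, \ldots, j_6 = 1}^{m} \sum_{s_1, \ldots, s_4 = 1}^{n} \rho_{j_1, j_2, s_3 - s_1}\, \rho_{j_3, j_4, s_4 - s_2}\, \rho_{j_5, j_6, s_2 - s_1}.
\end{equation*}
This is precisely the bound already estimated in the proof of Lemma~\ref{lem:gamma1}: reindexing the inner sum by $s_1$ together with the three differences $s_3 - s_1$, $s_4 - s_2$, $s_2 - s_1$ gives a factor $n$ times the product of three sums $\sum_u \rho_{\cdot, \cdot, u}$, each of which is finite by Lemma~\ref{lem:cov-ineq} (this is where $\underline{\alpha}\beta > 2$ is used). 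Hence $\gamma_2^2(F_i, F_k) \leq C m^6 n^{-1}$, and taking square roots yields $\gamma_2(F_i, F_k) \leq C m^3 n^{-1/2} \leq C m^8 n^{-1/2}$.

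The point worth attention is the handling of the four $z_3$-factors. It is essential that the two one keeps are those tying $s_1$ to $s_2$, because the resulting factor $\rho_{\cdot, \cdot, s_2 - s_1}$ is exactly what forces the fourfold sum over $s_1, \ldots, s_4$ to be of order $n$ instead of $n^2$; without it the prefactor $n^{-2}$ would leave no decay at all. An alternative is to keep all four $z_3$-factors and split the resulting $t_3$-integral by Hölder's inequality; this also works, but it is more technical and additionally needs $\sqrt{ab} \leq \tfrac{1}{2}(a + b)$ in order to avoid requiring summability of $\sqrt{\rho_{\cdot, \cdot, u}}$, which is not guaranteed under $\underline{\alpha}\beta > 2$ alone. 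I would therefore take the discarding route, which makes $\gamma_2$ no harder to bound than $\gamma_1$.
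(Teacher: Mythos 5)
Your argument is correct and delivers the stated bound (in fact with the slightly better power $m^{3}$ before the final crude majorization by $m^{8}$). It departs from the paper's proof precisely in the one step that distinguishes $\gamma_{2}$ from $\gamma_{1}$: the treatment of the four factors attached to $z_{3}$. The paper keeps all four, so that after expanding the norms it faces terms of the form $(1 \wedge x_3^4 \abs{g_{j_5}(s_1 - t_3) g_{j_6}(s_2 - t_3) g_{j_7}(s_3 - t_3) g_{j_8}(s_4 - t_3)})$; it integrates out $x_3$ via $\int_{\bbR} (1 \wedge x^4 y) \abs{x}^{-1 - \beta} \Di x = C y^{\beta/4}$ and then applies $\abs{xy} \leq x^2 + y^2$ to bound the resulting $t_3$-integral by $\rho_{j_5, j_6, s_2 - s_1} + \rho_{j_7, j_8, s_4 - s_3}$ --- this is exactly the ``more technical alternative'' you describe at the end, and either summand still couples two of the indices $s_1, \ldots, s_4$, so the fourfold sum stays of order $n$ and one lands at $C m^8 / n$. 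You instead discard the two $z_3$-factors indexed by $s_3$ and $s_4$, which is legitimate since each lies in $[0, 1]$, and this collapses the computation verbatim to the one for $\gamma_1$, giving $C m^6 / n$. Your route is shorter, avoids the $\beta/4$-identity and the AM--GM step, and is marginally sharper in $m$; its only delicate point, which you correctly flag, is that the retained pair must introduce a coupling of the summation indices not already supplied by the $z_1$- and $z_2$-factors (retaining, say, the $s_1$- and $s_3$-factors would duplicate the $z_1$-coupling, leave $s_2$ effectively free, and destroy the $n^{-1}$ decay; discarding all four is not an option at all, since the $z_3$-integral would then diverge). The paper's version pays for not having to make that choice with the extra integral identity and splitting inequality.
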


\begin{proof}
  Using \eqref{eq:malliavin-ineq:2} we conclude that the integrand in
  the definition of $\gamma_2(F_i, F_k)$ is bounded as follows:
  \begin{align*}
    \MoveEqLeft \Expt[\big]{(D_{z_1, z_3}^2 V_n^i)(D_{z_2, z_3}^2 V_n^i)}^{1/2} \Expt[\big]{(D_{z_1, z_3}^2 V_n^k)(D_{z_2, z_3}^2 V_n^k)}^{1/2}
    \\
    &\leq \frac{C}{n^2} \sum_{s_1, \ldots, s_4 = 1}^{n} (1 \wedge (\norm{\delta_{s_1}(z_1)}_{\bbR^m} \norm{\delta_{s_3}(z_1)}_{\bbR^m})) (1 \wedge (\norm{\delta_{s_2}(z_2)}_{\bbR^m} \norm{\delta_{s_4}(z_2)}_{\bbR^m}))
    \\
    &\hspace{3cm} \times (1 \wedge (\norm{\delta_{s_1}(z_3)}_{\bbR^m} \norm{\delta_{s_2}(z_3)}_{\bbR^m} \norm{\delta_{s_3}(z_3)}_{\bbR^m} \norm{\delta_{s_4}(z_3)}_{\bbR^m}))
    \\
    &\leq \frac{C}{n^2} \sum_{s_1, \ldots, s_4 = 1}^{n} \sum_{j_1, \ldots, j_8 = 1}^{m} (1 \wedge (x_1^2 \abs{g_{j_1}(s_1 - t_1) g_{j_2}(s_3 - t_1)}))
    \\
    &\hspace{3cm} \times (1 \wedge (x_2^2 \abs{g_{j_3}(s_2 - t_2) g_{j_4}(s_4 - t_2)}))
    \\
    &\hspace{3cm} \times (1 \wedge (x_3^4 \abs{g_{j_5}(s_1 - t_3) g_{j_6}(s_2 - t_3) g_{j_7}(s_3 - t_3) g_{j_8}(s_4 - t_3)})).
  \end{align*}
  Moreover, as in the proof of the previous lemma we have that
  \begin{equation*}
    \int_{\bbR^3} (1 \wedge (x_1^2 y_1)) (1 \wedge (x_2^2 y_2)) (1 \wedge (x_3^4 y_3)) \abs{x_1 x_2 x_3}^{-1 - \beta} \Di x_1 \Di x_2 \Di x_3
    = C y_1^{\beta / 2} y_2^{\beta / 2} y_3^{\beta / 4}
  \end{equation*}
  for $\beta \in (0, 2)$ and real numbers $y_1, y_2, y_3 > 0$. This
  implies that
  \begin{align*}
    \gamma_{2}^2(F_i,F_k) &\leq \frac{C}{n^2} \sum_{j_1, \ldots, j_8 = 1}^{m} \sum_{s_1, \ldots, s_4 = 1}^{n} \int_{\bbR} \abs{g_{j_1}(s_1 - t_1) g_{j_2}(s_3 - t_1)}^{\beta / 2} \Di t_1
    \\
                          &\hspace{1cm} \times \int_{\bbR} \abs{g_{j_3}(s_2 - t_2) g_{j_4}(s_4 - t_2)}^{\beta / 2} \Di t_2
    \\
                          &\hspace{1cm} \times \int_{\bbR} \abs{g_{j_5}(s_1 - t_3) g_{j_6}(s_2 - t_3) g_{j_7}(s_3 - t_3) g_{j_8}(s_4 - t_3)}^{\beta / 4} \Di t_3
    \\
                          &\leq \frac{C}{n^2} \sum_{j_1, \ldots, j_8 = 1}^{m} \sum_{s_1, \ldots, s_4 = 1}^{n} \rho_{j_1, j_2, s_3 - s_1} \rho_{j_3, j_4, s_4 - s_2} (\rho_{j_5, j_6, s_2 - s_1} + \rho_{j_7, j_8, s_4 - s_3})
    \\
                          &\leq \frac{C m^8}{n},
  \end{align*}
  where the last inequality follows as in Lemma~\ref{lem:gamma1} and
  the penultimate inequality follows immediately from the fact that
  $\abs{x y} \leq x^2 + y^2$ for all $x, y \in \bbR$.\qed
\end{proof}

Finally, we consider the crucial term $\gamma_3$. The proof is a
non-casual adaptation of \cite[Lemma~4.5]{BassBerr}, which requires
the analysis of additional terms compared to the argument in
\cite{BassBerr}.

\begin{lemma}
  \label{lem:gamma3}
  There exists a constant $C > 0$ such that for all $n \in \bbN$,
  \begin{equation*}
    \gamma_3 \leq C d^4m^4
    \begin{cases}
      n^{-1/2}, & \text{if $\underline{\alpha} \beta > 3$,}
      \\
      n^{-1/2} \log(n), & \text{if $\underline{\alpha} \beta = 3$,}
      \\
      n^{(2 - \underline{\alpha} \beta) / 2}, & \text{if
        $2 < \underline{\alpha} \beta < 3$.}
    \end{cases}
  \end{equation*}
\end{lemma}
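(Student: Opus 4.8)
The plan is to reduce $\gamma_3$ to an integral involving only the deterministic quantity
\[
  \Phi(z) \coloneqq \sum_{s=1}^{n} \bigl(1 \wedge \norm{\delta_s(z)}_{\bbR^m}\bigr), \qquad z = (x,t) \in \bbR^2,
\]
with $\delta_s(z)$ as in \eqref{eq:delta-def}, and then to estimate that integral by a careful, two‑sided region‑splitting of the real line that is the non‑casual analogue of \cite[Lemma~4.5]{BassBerr}. First I would carry out the reduction. Since $D_z V_n^\ell = n^{-1/2}\sum_{s=1}^{n} D_z f_\ell(X_s^1,\dots,X_s^m)$ and \eqref{eq:malliavin-ineq:1} is purely deterministic, one gets $\abs{D_z F_\ell} \leq C n^{-1/2}\Phi(z)$ for every coordinate $\ell$ and almost every $\omega$, whence $\abs{D_z F_j D_z F_k} \leq C n^{-1}\Phi(z)^2$ and $\norm{D_z F}_{\bbR^d} \leq C\sqrt d\, n^{-1/2}\Phi(z)$. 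As the integrands in \eqref{eq:gamma3} are thus dominated by deterministic quantities, the three expectations may be replaced by those bounds (using $a^{3/2}\wedge b^{3/2} = (a\wedge b)^{3/2}$), giving
\[
  \gamma_3 \;\leq\; C\,d^{7/2}\int_{\bbR^2} \Bigl(n^{-3/2}\Phi(z)^3 \;\wedge\; n^{-1}\Phi(z)^2\Bigr)\,\mu(\di z),
\]
the remaining powers of $d$ being harmless since $d$ is fixed (we still track the powers of $m$). Keeping the minimum is essential: the first term alone yields $C n^{-3/2}\int_{\bbR^2}\Phi^3\,\mu$, which is of order $n^{5/2-\underline{\alpha}\beta}$ when $\underline{\alpha}\beta\in(2,3)$ and therefore does not tend to $0$ for $\underline{\alpha}\beta\leq 5/2$; it is precisely the cap by $\norm{D_z F}_{\bbR^d}^{3/2}$ in \eqref{eq:gamma3} that enforces the distinction between small and large values made in \cite{BassBerr}.

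Next I would split $\bbR^2 = \{\Phi\leq\sqrt n\}\cup\{\Phi>\sqrt n\}$ — precisely the set on which the minimum switches — bounding the minimum by $n^{-3/2}\Phi^3$ on the first piece and by $n^{-1}\Phi^2$ on the second. On each piece one expands $\Phi(z)^k = \sum_{s_1,\dots,s_k=1}^{n}\prod_{r=1}^{k}\bigl(1\wedge\norm{\delta_{s_r}(z)}_{\bbR^m}\bigr)$ and integrates the jump‑size coordinate $x$, writing $\mu(\di z)=\di t\,\nu(\di x)$ with $\nu(\di x)\leq C\abs{x}^{-1-\beta}\di x$. The crucial point is to avoid the lossy estimate $\prod_r(1\wedge a_r)\leq 1\wedge\prod_r a_r$: for fixed $t$ one orders the numbers $G_{s_r}(t) \coloneqq \norm{(g_\ell(s_r-t))_{\ell=1}^{m}}_{\bbR^m}$ and integrates $\abs{x}^{-1-\beta}\di x$ piecewise across the thresholds $G_{s_r}(t)^{-1}$. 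Together with the scaling identity $\int_{\bbR}(1\wedge\abs{x}^p a)\abs{x}^{-1-\beta}\di x = C_{\beta,p}\,a^{\beta/p}$ for $0<\beta<p$, and the subadditivity bound $G_s(t)^{\beta/k}\leq\sum_{\ell=1}^{m}\abs{g_\ell(s-t)}^{\beta/k}$ (where the $m$‑powers enter), this expresses the $\mu$‑integral of $\prod_r\bigl(1\wedge\norm{\delta_{s_r}(z)}_{\bbR^m}\bigr)$ through sums of integrals $\int_{\bbR}\prod_{r=1}^{k}\abs{g_{\ell_r}(s_r-t)}^{\beta/k}\,\di t$.

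Each such $t$‑integral is then estimated by the same region‑splitting of $\bbR$ used in the proof of Lemma~\ref{lem:cov-ineq}, but now for $k\in\{2,3\}$ kernels and with the left and right polynomial tails of every $g_{\ell_r}$ handled separately — this is the source of the "additional terms" relative to the casual one‑dimensional argument of \cite[Lemma~4.5]{BassBerr}. Invoking \eqref{eq:kernel-ass}, together with $\kappa_i>-1/\beta$ (used exactly to keep the near‑zero part integrable even when all $k$ arguments coalesce, since one then integrates $\abs{t}^{(\kappa_{\ell_1}+\dots+\kappa_{\ell_k})\beta/k}$ with exponent $>-1$), one obtains
\[
  \int_{\bbR^2}\prod_{r=1}^{k}\bigl(1\wedge\norm{\delta_{s_r}(z)}_{\bbR^m}\bigr)\,\mu(\di z) \;\leq\; C\,m^{c_k}\,(1\vee\Delta)^{\,1-\underline{\alpha}\beta}, \qquad \Delta \coloneqq \max_{r} s_r - \min_{r} s_r,
\]
the slowest tail, governed by $\underline{\alpha}$, dominating.

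Finally I would sum over $s_1,\dots,s_k\in\{1,\dots,n\}$, using that the number of $k$‑tuples of diameter $D$ is at most $C\,n\,D^{k-2}$, and — crucially for the cubic piece — that on $\{\Phi\leq\sqrt n\}$ only triples with $\Delta\leq C\sqrt n$ contribute appreciably, because a triple of large diameter can make $\prod_r\bigl(1\wedge\norm{\delta_{s_r}(z)}_{\bbR^m}\bigr)$ non‑negligible only for $\abs{x}\gtrsim\Delta^{\underline{\alpha}}$, and such $\abs{x}$ force $\Phi(z)\gtrsim\Delta$; establishing this dichotomy (the non‑casual counterpart of the corresponding step in \cite{BassBerr}) is the main obstacle. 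This yields
\[
  n^{-3/2}\!\!\int_{\{\Phi\leq\sqrt n\}}\!\!\!\Phi^3\,\mu \;\leq\; C m^{c_3} n^{-1/2}\!\!\!\sum_{D=1}^{\ceil{\sqrt n}}\!\! D^{\,2-\underline{\alpha}\beta}, \qquad n^{-1}\!\!\int_{\{\Phi>\sqrt n\}}\!\!\!\Phi^2\,\mu \;\leq\; C m^{c_2} n^{(2-\underline{\alpha}\beta)/2},
\]
and the second quantity never exceeds the first. Since $\sum_{D=1}^{\ceil{\sqrt n}} D^{\,2-\underline{\alpha}\beta}$ is $O(1)$ if $\underline{\alpha}\beta>3$, $O(\log n)$ if $\underline{\alpha}\beta=3$, and $O\!\left(n^{(3-\underline{\alpha}\beta)/2}\right)$ if $2<\underline{\alpha}\beta<3$, we conclude that $\gamma_3$ is of order $n^{-1/2}$, $n^{-1/2}\log n$, and $n^{(2-\underline{\alpha}\beta)/2}$ in the three cases, with a constant of the asserted form $C m^{5}$.
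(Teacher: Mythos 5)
Your opening reduction is exactly the paper's: both arguments replace the expectations in \eqref{eq:gamma3} by the deterministic bound $C(n^{-3/2}\Phi^3\wedge n^{-1}\Phi^2)$ coming from \eqref{eq:malliavin-ineq:1}, and both recognize that keeping the minimum is what saves the heavy-tailed case. After that you diverge: the paper never touches the level sets of $\Phi$ or counts tuples by diameter; instead it partitions the $(x,t)$-plane by the size of the jump $x$ (the three regions $|x|<1$, $1\le|x|\le n^{\underline{\alpha}}$, $|x|>n^{\underline{\alpha}}$) and by the location of $t$ relative to $[-n,n]$, and in each region it proves a \emph{pointwise} bound on $A_n(x,s)=n^{-1/2}\Phi(x,s)$ (namely $A_n\lesssim n^{-1/2}x^{1/\underline{\alpha}}$, $A_n\lesssim xn^{1/2}|s|^{-\underline{\alpha}}$, or $A_n\le n^{1/2}$), choosing the exponent $p=2$ or $q=3$ according to whether that bound is below or above $1$. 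Your route is a legitimate alternative in spirit, but as written it has a genuine gap at precisely the step you flag as the main obstacle.

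Concretely: for $2<\underline{\alpha}\beta<3$ the unrestricted cubic sum gives $n^{-3/2}\cdot n\sum_{D\le n}D^{2-\underline{\alpha}\beta}\asymp n^{5/2-\underline{\alpha}\beta}$, which exceeds the target $n^{(2-\underline{\alpha}\beta)/2}$ (and does not even vanish when $\underline{\alpha}\beta\le 5/2$), so the restriction to triples of diameter $\Delta\lesssim\sqrt n$ is indispensable and carries the entire burden of the proof. The justification you offer --- that $|x|\gtrsim\Delta^{\underline{\alpha}}$ forces $\Phi(z)\gtrsim\Delta$ --- requires a \emph{lower} bound on the kernels, whereas \eqref{eq:kernel-ass} is only an upper bound: a kernel supported on a few well-separated unit intervals satisfies all hypotheses, yet admits wide triples with $\prod_r(1\wedge\norm{\delta_{s_r}(z)}_{\bbR^m})\approx 1$ while $\Phi(z)=O(1)$. (The crude substitute $\Phi^3\1_{\{\Phi\le\sqrt n\}}\le\sqrt n\,\Phi^2$ only yields $O(1)$, so this cannot be patched trivially.) In addition, the bound $n^{-1}\int_{\{\Phi>\sqrt n\}}\Phi^2\,\di\mu\le Cm^{c_2}n^{(2-\underline{\alpha}\beta)/2}$ is asserted without argument; making it rigorous requires knowing that $\Phi>\sqrt n$ forces $|x|\gtrsim n^{\underline{\alpha}/2}$ (for $t$ in range) together with a separate treatment of $|t|>n$, i.e.\ essentially the estimates \eqref{eq:A_n-I_2} and \eqref{eq:A_n-large-s} that form the core of the paper's proof. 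The intermediate claim $\int_{\bbR^2}\prod_{r}(1\wedge\norm{\delta_{s_r}(z)}_{\bbR^m})\,\mu(\di z)\le Cm^{c_k}(1\vee\Delta)^{1-\underline{\alpha}\beta}$ is plausible (and sharper than what Lemma~\ref{lem:cov-ineq} gives via the lossy product bound), but it too is only sketched and would need a full proof handling both polynomial tails and the origin singularities. Until the diameter-restriction dichotomy is established by some means not relying on a kernel lower bound, the proposal does not prove the lemma.
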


\begin{proof}
	Recall the definition of
	$\delta_s(z) = (\delta_s^1(z), \ldots, \delta_s^m(z))$ from
	\eqref{eq:delta-def} and define for $i\in\{1,\ldots,m\}$,
	\begin{equation*}
		A_n^i(z) \coloneqq \frac{1}{\sqrt{n}} \sum_{s = 1}^{n} (1 \wedge \abs{\delta_s^i(z)}).
	\end{equation*}
Denote by $f = (f_1, \ldots, f_d)$ the coordinate functions of $f$.
We start by writing
\begin{align*}
  \gamma_3
  &= \sum_{i, j, k = 1}^{d}
    \int_{\bbR^2} \Expt[\big]{\abs{D_z V_j D_z V_k}^{\frac{3}{2}} \wedge \norm{D_z V}_{\bbR^d}^{\frac{3}{2}}}^{\frac{2}{3}} \Expt[\big]{\abs{D_z V_i}^3}^{\frac{1}{3}} \isp \mu(\di z)
  \\
  &= \sum_{i, j, k = 1}^{d}
    \int_{\bbR^2} \Expt[\bigg]{\abs[\Big]{\Bigl(\frac{1}{\sqrt{n}} \sum_{s = 1}^{n} D_z f_j(X_s^1, \ldots, X_s^m)\Bigr)
    \Bigl(\frac{1}{\sqrt{n}} \sum_{s = 1}^{n} D_z f_k(X_s^1, \ldots, X_s^m)\Bigr)}^{\frac{3}{2}}
    \wedge \norm{D_z V}_{\bbR^d}^{\frac{3}{2}}}^{\frac{2}{3}}
  \\
  &\qquad\qquad\qquad \times\Expt[\bigg]{\abs[\Big]{\Bigl(\frac{1}{\sqrt{n}} \sum_{s = 1}^{n} D_z f_i(X_s^1, \ldots, X_s^m)\Bigr)}^3}^{1/3} \isp \mu(\di z).
\end{align*}
Using the triangle inequality, \eqref{eq:malliavin-ineq:1} and the fact that the $l^1$-norm dominates
the $l^2$-norm we find that
\begin{align*}
	\gamma_3
	&\leq C d^3
	\int_{\bbR^2} \Expt[\bigg]{\Bigl(\frac{1}{\sqrt{n}} \sum_{s = 1}^{n} (1 \wedge \norm{\delta_s(z)}_{\bbR^m})\Bigr)^2 \wedge \norm{D_z V}_{\bbR^d}^{3/2}}^{2/3}
	\Bigl(\frac{1}{\sqrt{n}} \sum_{s = 1}^{n} (1 \wedge \norm{\delta_s(z)}_{\bbR^m})\Bigr)  \isp \mu(\di z)
	\\
	&\leq  C d^3
	\int_{\bbR^2} \Expt[\bigg]{\Bigl(\frac{1}{\sqrt{n}} \sum_{s = 1}^{n} (1 \wedge \norm{\delta_s(z)}_{\bbR^m})\Bigr)^3
		\wedge \Bigl(\sum_{i = 1}^{d} \abs[\Big]{\frac{1}{\sqrt{n}}\sum_{s = 1}^{n} D_z f_i(X_s^1, \ldots, X_s^m)}\Bigr)^{3/2}}^{2/3}
	\\
	&\qquad\qquad \times\Bigl(\frac{1}{\sqrt{n}} \sum_{s = 1}^{n} (1 \wedge \norm{\delta_s(z)}_{\bbR^m})\Bigr)  \isp \mu(\di z)
	\\
	&\leq C d^3 \int_{\bbR^2} \biggl[ \Bigl(\frac{1}{\sqrt{n}} \sum_{s = 1}^{n} (1 \wedge \norm{\delta_s(z)}_{\bbR^m})\Bigr)^2
	\wedge \Bigl(\frac{d}{\sqrt{n}}\sum_{s = 1}^{n} (1 \wedge \norm{\delta_s(z)}_{\bbR^m})\Bigr) \biggr]
	\\
	&\qquad\qquad \times\Bigl(\frac{1}{\sqrt{n}} \sum_{s = 1}^{n} (1 \wedge \norm{\delta_s(z)}_{\bbR^m})\Bigr)  \isp \mu(\di z)
	\\
	&= C d^3 \int_{\bbR^2} \Bigl(\frac{1}{\sqrt{n}} \sum_{s = 1}^{n} (1 \wedge \norm{\delta_s(z)}_{\bbR^m})\Bigr)^3
	\wedge d \Bigl(\frac{1}{\sqrt{n}}\sum_{s = 1}^{n} (1 \wedge \norm{\delta_s(z)}_{\bbR^m})\Bigr)^2 \isp \mu(\di z).
\end{align*}
Together with the definition of $A_n^i$, the subadditivity of minimum
and Jensen's inequality we conclude that
\begin{align*}
	\gamma_3
	&\leq C d^4 \int_{\bbR^2} \Bigl(\sum_{i = 1}^{m} A_n^i(z)\Bigr)^3
	\wedge \Bigl(\sum_{j = 1}^{m} A_n^j(z)\Bigr)^2 \isp \mu(\di z)
	\\
	&\leq  C d^4 \int_{\bbR^2} \Bigl( m^2 \sum_{i = 1}^{m} A_n^i(z)^3 \Bigr)
	\wedge \Bigl(m\sum_{j = 1}^{m} A_n^j(z)^2 \Bigr) \isp \mu(\di z)
	\\
	&\leq C d^4 m^2 \sum_{i, j = 1}^{m} \int_{\bbR^2} A_n^i(z)^3 \wedge A_n^j(z)^2 \isp \mu(\di z)
\end{align*}	
We shall now prove a result akin to Lemma~4.6 in
  \cite{BassBerr}. Namely, for  we for any
  $p \in [1, 2]$, $q > 2$
  and $j, k \in \Set{1, \ldots, m}$ we have that
  \begin{equation}
    \label{eq:A_n-ineq}
    \int_{\bbR^2} (A_n^j(z)^p \wedge A_n^k(z)^q) \isp \mu(\di z) \leq C
    \begin{cases}
      n^{1 - q/2}, & \text{if $\underline{\alpha} \beta > q$,}
      \\
      n^{1 - q/2} \log(n), & \text{if $\underline{\alpha} \beta = q$,}
      \\
      n^{(2 - \underline{\alpha} \beta)/2}, & \text{if $2 < \underline{\alpha} \beta < 3$.}
    \end{cases}
  \end{equation}
  We split the integral into three regions $\abs{x} \in (0, 1)$,
  $\abs{x} \in [1, n^{\underline{\alpha}}]$ and
  $\abs{x} \in (n^{\underline{\alpha}}, \infty)$. Since $A_n^i$ is an
  even function of $x$ this yields the following decomposition:
  \begin{align*}
    \MoveEqLeft \int_{\bbR^2} A_n^j(x, s)^p \wedge A_n^k(x, s)^q \Di s \isp \nu(\di x)
    \\
    &= 2 \int_{0}^{1} \int_{\bbR} A_n^j(x, s)^p \wedge A_n^k(x, s)^q \Di s \isp \nu(\di x)
    \\
    &\qquad + 2 \int_{1}^{n^{\underline{\alpha}}} \int_{\bbR} A_n^j(x, s)^p \wedge A_n^k(x, s)^q \Di s \isp \nu(\di x)
    \\
    &\qquad + 2 \int_{n^{\underline{\alpha}}}^{\infty} \int_{\bbR} A_n^j(x, s)^p \wedge A_n^k(x, s)^q \Di s \isp \nu(\di x)
      \eqqcolon I_1 + I_2 + I_3.
  \end{align*}
  Throughout we let $\ell \in \Set{j, k}$ denote the index of either
  $A_n^j$ or $A_n^k$, depending on the context. Next, we bound each
  $I$-term separately.

  \paragraph{Bounding $I_1$}

  Consider $x \in \bbR$ and $s \in [-n - 1, n + 1] \setminus
  \bbZ$. Then splitting according to whether or not $x\in(-1, 1)$ and
  using assumption~\eqref{eq:kernel-ass} on $g$ we see that
  \begin{align*}
    \sum_{i = 1}^{n} 1 \wedge \abs{x g_\ell(i - s)}
    &\leq \sum_{i = 1}^{n} 1 \wedge \abs{x g_\ell(i - s)} \1_{(-1, 1)}(i - s)
      + \sum_{i = 1}^{n} \abs{x g_\ell(i - s)} \1_{(-1, 1)\com}(i - s)
    \\
    &\leq \sum_{i = 1}^{n} 1 \wedge \abs{x g_\ell(i - s)} \1_{(-1, 1)}(i - s)
      + C x \sum_{k \in \bbZ \setminus \Set{0}} \abs{k}^{-\underline{\alpha}}
    \\
    &\eqqcolon h_1(s, x) + h_2(s, x),
  \end{align*}
  where the series is finite since $\underline{\alpha} > 1$, which in
  turn follows from the assumption $\underline{\alpha} > 2 / \beta$
  and $0 < \beta < 2$. We analyse the first term, $h_1$, further. To
  get a hand on the number of summands in $h_1$ we need to introduce
  some further notation and terminology. For a real number
  $s \in \bbR$ we write $s = \ip{s} + \fp{s}$ for its decomposition
  into its integer and fractional part. Formally, the integer part is
  defined as
  \begin{equation*}
    \ip{s} =
    \begin{cases}
      \floor{s}, & \text{if $s \geq 0$,}
      \\
      \ceil{s}, & \text{if $s < 0$,}
    \end{cases}
  \end{equation*}
  where $ \floor{s} \coloneqq \max \Set{k \in \bbZ \mid k \leq s}$ and
  $\ceil{s} \coloneqq \min \Set{k \in \bbZ \mid s \leq k}$.  The
  fractional part is then defined as
  \begin{equation*}
    \fp{s} = s - \ip{s}
    =
    \begin{cases}
      s - \floor{s} \in [0, 1), & \text{if $s \geq 0$,}
      \\
      s - \ceil{s} \in (-1, 0], & \text{if $s < 0$.}
    \end{cases}
  \end{equation*}
  Before proceeding we observe the identities:
  \begin{equation}
    \label{eq:ip-fp-identities}
    \floor{-s} = - \ceil{s}, \quad
    \ceil{-s} = - \floor{s}
    \quad \text{and} \quad
    \fp{-s} = - \fp{s},
  \end{equation}
  where the second identity follows from the first and the third from
  the definition and the two preceding identities. Consider 
  $s \in \bbR$ and $i \in \bbZ$ such that $i - s \in (-1, 1)$. The
  reverse triangle inequality yields that
  \begin{equation*}
    \abs[\big]{\abs{i - \ip{s}} - \abs{\fp{s}}} \leq \abs{i - s} < 1,
  \end{equation*}
  and since $\fp{s} \in (-1, 1)$ this shows that
  $i - \ip{s} \in \Set{-1, 0, 1}$. Hence for some
  $u \in \Set{-1, 0, 1}$ we have that $i - s = u - \fp{s}$. From
  \eqref{eq:kernel-ass} we then obtain the bound
  \begin{align*}
    h_1(s, x) &\leq \sum_{u = -1}^{1} 1 \wedge \abs{x g_\ell(u - \fp{s})} \1_{(-1, 1)}(u - \fp{s})
    \\
              &\leq \sum_{u = -1}^{1} 1 \wedge x \abs{u - \fp{s}}^{\kappa_\ell} \1_{(-1, 1)}(u - \fp{s}).
  \end{align*}
  Note that the indicator function is crucial here due the following observation.
  Consider $s \in \bbR$ and $u \in \Set{-1, 0, 1}$ such that
  $u - \fp{s} \in (-1, 1)$. If $s \geq 0$, then $u \geq 0$, since if
  $u < 0$ then $u - s = -1 - s < -1$ and therefore
  $u - \fp{s} \notin (-1, 1)$. In other words, $\sgn(s) = \sgn(u)$.

  These considerations lead us to
  \begin{align*}
    \int_{-n - 1}^{n + 1} h_1(x, s)^q \Di s
    &\leq C \sum_{u = -1}^{1} \int_{-n - 1}^{n + 1} (1 \wedge x \abs{u - \fp{s}}^{\kappa_\ell} \1_{(-1, 1)}(u - \fp{s}))^q \Di s
    \\
    &= C \sum_{u = 0}^{1} \int_{0}^{n + 1} (1 \wedge x \abs{u - \fp{s}}^{\kappa_\ell})^q \Di s
    \\
    &= C \sum_{u = 0}^{1} (n + 1) \int_{0}^{1} (1 \wedge x \abs{u - \fp{s}}^{\kappa_\ell})^q \Di s
    \\
    &\leq C n \int_{0}^{1} (1 \wedge x \abs{s}^{\kappa_\ell})^q \Di s
  \end{align*}
  where the first equality follows from the fact that if $s < 0$ then
  $\abs{u - \fp{s}} = \abs{-u - \fp{-s}}$ (see
  \eqref{eq:ip-fp-identities}) together with our signum
  observation. The second equality and the second inequality follow by
  substitution. If $\kappa < 0$ we can continue our previous stream of
  inequalities and get
  \begin{align*}
    \int_{-n - 1}^{n + 1} h_1(x, s)^q \Di s
    &\leq C n x^q \int_{x^{-1 / \kappa_\ell}}^1 s^{\kappa_\ell q} \Di s
      + C n \int_{0}^{x^{-1/\kappa_\ell}} 1 \Di s
    \\
    &\leq C n (x^q \abs{\log(x)} + x^{-1 / \kappa_\ell}),
  \end{align*}
  which follows by splitting the first integral into the cases:
  $\kappa_\ell q = -1$ and $\kappa_\ell q \neq -1$. For $\kappa_\ell \geq 0$ we get
  that
  \begin{equation*}
    \int_{-n - 1}^{n + 1} h_1(x, s)^q \Di s \leq C n x^q
  \end{equation*}
  and the same bounds hold for our second term as well:
  \begin{equation*}
    \int_{-n - 1}^{n + 1} h_2(x, s)^q \Di s \leq C n x^q.
  \end{equation*}
  Combining these inequalities together with
  \eqref{eq:stable-levy-measure} leads to
  \begin{align}
    \int_{0}^{1} \int_{[-n - 1, n + 1]} A_n^k(x, s)^q \Di s \isp \nu(\di x)
    &\leq C n^{1 - q /2} \int_{0}^{1} \bigl(x^q \abs{\log(x)} + x^{-1 / \kappa_\ell} \1_{\Set{\kappa_\ell < 0}} \bigr) x^{-1 - \beta} \Di x
      \nonumber
    \\
    &\leq C n^{1 - q/2},
      \label{eq:I_1-1}
  \end{align}
  where the last inequality follows from the assumption
  $\kappa_\ell > - 1 / \beta$.

  Consider now $s \notin [-n - 1, n + 1]$ and note that
  $i - s \notin (-1, 1)$ for any $i \in \Set{1, \ldots, n}$.
  Then, assumption~\eqref{eq:kernel-ass} yields that
  \begin{equation}
    \label{eq:geometric-bound}
    \sum_{i = 1}^{n} 1 \wedge \abs{x g_\ell(i - s)}
    \leq C x \sum_{i = 1}^{n} \abs{i - s}^{-\underline{\alpha}}
    \leq C x \abs{\abs{1 - s}^{1 - \underline{\alpha}} - \abs{n - s}^{1 - \underline{\alpha}}}.
  \end{equation}
  In the case $q (1 - \underline{\alpha}) < -1$ we simply remove the non-positive
  terms in \eqref{eq:geometric-bound} to obtain the bound
  \begin{equation}
    \label{eq:x-0-1-outside-B-1}
    \begin{aligned}
      \MoveEqLeft \int_{[-n - 1, n + 1]\com} \abs[\big]{\abs{1 - s}^{1 -
      \underline{\alpha}} - \abs{n - s}^{1 - \underline{\alpha}}}^q \Di s
      \\
      &\leq \int_{-\infty}^{-n - 1} \abs{1 - s}^{q(1 - \underline{\alpha})} \Di s
        + \int_{n + 1}^{\infty} \abs{n - s}^{q(1 - \underline{\alpha})} \Di s
      \\
      &\leq \int_{1}^{\infty} s^{q(1 - \underline{\alpha})} \Di s < \infty.
    \end{aligned}
  \end{equation}
  Suppose now that $(1 - \underline{\alpha}) q > -1$. Then, by substitution and using the fact that
  $n \geq 2$, we have that
  \begin{equation}
    \label{eq:x-0-1-outside-B-2}
    \begin{aligned}
      \MoveEqLeft \int_{[-n - 1, n + 1]\com} \abs[\big]{\abs{1 - s}^{1 - \underline{\alpha}} - \abs{n - s}^{1 - \underline{\alpha}}} \Di s
      \\
      &= \int_{-\infty}^{-n - 1} ((1 - s)^{1 - \underline{\alpha}} - (n - s)^{1 - \underline{\alpha}})^q \Di s
        + \int_{n + 1}^{\infty} ((s - n)^{1 - \underline{\alpha}} - (s - 1)^{1 - \underline{\alpha}})^q \Di s
      \\
      &= \int_{n + 1}^{\infty} (s^{1 - \underline{\alpha}} - (s + n)^{1 - \underline{\alpha}})^q \Di s
        + \int_{1}^{\infty} (s^{1 - \underline{\alpha}} - (s + n - 1)^{1 - \underline{\alpha}})^q \Di s
      \\
      &\leq n^{q(1 - \underline{\alpha}) + 1} \int_{n^{-1}}^{\infty} (s^{1 - \underline{\alpha}} - (s + 1 - \tfrac{1}{n})^{1 - \underline{\alpha}})^q \Di s
      \\
      &\leq n^{q(1 - \underline{\alpha}) + 1} \Bigl(\int_{n^{-1}}^{1} s^{q(1 - \underline{\alpha})} \Di s
        + \int_{1}^{\infty} s^{-\underline{\alpha} q} (1 - \tfrac{1}{n}) \Di s
        \Bigr)
        \leq C n^{q(1 - \underline{\alpha}) + 1}.
    \end{aligned}
  \end{equation}
  Combining \eqref{eq:geometric-bound}--\eqref{eq:x-0-1-outside-B-2}
  shows that if $q (d - \underline{\alpha}) \neq -1$, then
  \begin{equation}
    \label{eq:I_1-2}
    \begin{aligned}
      \MoveEqLeft \int_{-1}^{1} \biggl(\int_{[-n - 1, n + 1]\com} A_n^k(x, s)^q \Di s \biggr) \isp \nu(\di x)
      \\
      &\leq C n^{-q / 2} \int_{-1}^{1} \abs{x}^{q - 1 - \beta} \bigl( 1 + n^{q(1 - \underline{\alpha}) + 1} \bigr) \Di x
      \\
      &\leq C n^{1 - q / 2},
    \end{aligned}
  \end{equation}
  where the last inequality follows from $\underline{\alpha} >
  1$. Suppose $q(1 - \underline{\alpha}) = - 1$. Then
  \eqref{eq:kernel-ass} is satisfied with
  $\tilde{\underline{\alpha}} = \underline{\alpha} - \varepsilon$ and
  $q(d - \tilde{\underline{\alpha}}) \neq - 1$ for all
  $\varepsilon > 0$ sufficiently small. Therefore
  \eqref{eq:x-0-1-outside-B-2} holds for $\tilde{\underline{\alpha}}$
  and it is easily seen that in turn \eqref{eq:I_1-2} still holds when
  using $\tilde{\underline{\alpha}}$. So, combining \eqref{eq:I_1-1}
  and~\eqref{eq:I_1-2} proves that
  \begin{equation}
    \label{eq:I_1}
    I_1 \leq C n^{1 - q / 2} \quad \text{for all $n$ in $\bbN$.}
  \end{equation}

  \paragraph{Bounding $I_2$} We split $I_2$ as follows:
  \begin{align*}
    I_2 &\leq C \biggl(
          \int_{1}^{n^{\alpha}} x^{-1 - \beta} \Bigl(
          \int_{-n}^{n} A_n^i(x, s)^p \wedge A_n^j(x, s)^q \Di s
          \Bigr) \Di x
    \\
        &\quad+
          \int_{1}^{n^{\alpha}} x^{-1 - \beta} \Bigl(
          \int_{[-n, n]\com} A_n^i(x, s)^p \wedge A_n^j(x, s)^q \Di s
          \Bigr) \Di x
          \biggr)
    \\
        &\eqqcolon I_{2, 1} + I_{2, 2}.
  \end{align*}
  We consider first $I_{2, 1}$, but before splitting it further we
  split the sum defining $A_n^{\ell}$. For $s \in \bbR$ and $x > 1$ we write
  \begin{equation}
    \label{eq:A_n-split}
    \begin{aligned}
      n^{1 / 2} A_n^{\ell}(x, s) &= \sum_{i = 1}^{n} 1 \wedge \abs{x g_\ell(i - s)}
      \\
                                 &\leq \sup_{s \in \bbR} \# \Set{i \in \bbN \mid -1 \leq i - s \leq 1}
                                   + \sum_{i = 1}^{n} 1 \wedge \abs{x g_\ell(i - s)} \1_{[-1, 1]\com}(i - s)
      \\
                                 &\leq C x^{1 / \underline{\alpha}}
                                   + \sum_{i = 1}^{n} 1 \wedge x \abs{i - s}^{-\underline{\alpha}} \1_{[-1, 1]\com}(i - s)
      \\
                                 &\eqqcolon C x^{1 / \underline{\alpha}} + h_3(x, s).
    \end{aligned}
  \end{equation}
  We split $h_3$ additionally into two functions:
  \begin{equation}
    \label{eq:h_3}
    h_3(x, s) = \sum_{i = \ip{s} + 1}^{n} 1 \wedge x \abs{i - s}^{-\underline{\alpha}}
    + \sum_{i = 1}^{[s] - 1} 1 \wedge x \abs{i - s}^{-\underline{\alpha}}
    \eqqcolon h_{3, 1}(x, s) + h_{3, 2}(x, s)
  \end{equation}
  and note that $h_{3, 2}(x, s) = 0$ for $s \leq 1$. For $h_{3, 1}$
  we consider first the case where $s + x^{1/\underline{\alpha}} \leq n$,
  for which we have 
  \begin{equation}
    \label{eq:h_3-1}
    \begin{aligned}
      h_{3, 1}(x, s) &\leq \sum_{i = [s] + 1}^{[s + x^{1 / \underline{\alpha}}]} 1
                       + x \sum_{i = [s + x^{1 / \underline{\alpha}}] + 1}^n \abs{i - s}^{-\underline{\alpha}}
                       \leq x^{1 / \underline{\alpha}}
                       + C x ((x^{1 / \underline{\alpha}})^{1 - \underline{\alpha}}- (n - s)^{1 - \underline{\alpha}})
      \\
                     &\leq 2 x^{1 / \underline{\alpha}}.
    \end{aligned}
  \end{equation}
  In second case $s + x^{1 / \underline{\alpha}} > n$, $h_{3, 1}(x, s)$ is bounded as follows:
  \begin{equation}
    \label{eq:h_3-2}
    h_{3, 1}(x, s) \leq \sum_{i = [s] + 1}^{[s + x^{1 / \underline{\alpha}}]} 1 \leq 2 x^{1 / \underline{\alpha}},
  \end{equation}
  where we used that $x > 1$. The function $h_{3, 2}$ is split
  according to the minima, noting that
  $x \abs{i - s}^{-\underline{\alpha}} \leq 1$ if and only if
  $i \leq s - x^{1 / \underline{\alpha}}$, under the condition
  $i \leq s$. Hence,
  \begin{equation}
    \label{eq:h_3-3}
    \begin{aligned}
      h_{3, 2}(x, s) &= x \sum_{i = 1}^{[s - x^{1 / \underline{\alpha}}]} (s - i)^{-\underline{\alpha}}
                       + \sum_{i = [s - x^{1 / \underline{\alpha}}] + 1}^{[s] - 1} 1
      \\
                     &\leq C x ((s - [s - x^{1 / \underline{\alpha}}])^{1 - \underline{\alpha}} - (s - 1)^{1 - \underline{\alpha}})
                       + x^{1 / \underline{\alpha}}
      \\
                     &\leq 2 x^{1 / \underline{\alpha}}.
    \end{aligned}
  \end{equation}
  Combining \eqref{eq:h_3}--\eqref{eq:h_3-3} it follows from
  \eqref{eq:A_n-split} that
  \begin{equation}
    \label{eq:A_n-I_2}
    A_n^\ell(x, s) \leq C n^{-1 / 2} x^{1 / \underline{\alpha}}.
  \end{equation}
  Using \eqref{eq:A_n-I_2} we can proceed exactly as in
  equations~(4.28) and~(4.29) in \cite{BassBerr} to deduce that
  \begin{equation}
    \label{eq:I_21}
    I_{2, 1} \leq C
    \begin{cases}
      n^{1 - q / 2}, & \text{if $\underline{\alpha} \beta > q$,}
      \\
      n^{1 - q / 2} \log(n), & \text{if $\underline{\alpha} \beta = q$,}
      \\
      n^{(2 - \underline{\alpha} \beta) / 2}, & \text{if
        $2 < \underline{\alpha} \beta < 3$.}
    \end{cases}
  \end{equation}

  Before proceeding with $I_{2, 2}$ we first deduce the bound for
  $\abs{s} > x^{1 / \underline{\alpha}}$:
  \begin{equation}
    \label{eq:A_n-large-s}
    A_n^{\ell}(x, s) \leq x n^{-1 / 2} \sum_{t = 1}^{n} \abs{g_\ell(t - s)}
    \leq C x n^{1 / 2} \abs{s}^{-\underline{\alpha}}.
  \end{equation}
  Indeed, we start by observing that for all $n \in \bbN$,
  $t \in \Set{1, \ldots, n}$ and all such $s$ we have that
  $ \abs{t - s} \geq \abs[\big]{\abs{t} - \abs{s}} \geq
  x^{1/\underline{\alpha}} - n \geq 1.  $ Moreover, for any
  $n \in \bbN$, $\abs{s} > x^{1/\underline{\alpha}}$ and
  $t \in \Set{1, \ldots, n}$,
  $ \frac{\abs{t - s}}{\abs{s}} \geq \abs{1 - \frac{\abs{t}}{\abs{s}}}
  \geq \frac{1}{2}.  $ Hence, for all $n \in \bbN$, $s$ and
  $t \in \Set{1, \ldots, n}$, we have that
  $ \abs{t - s}^{-\underline{\alpha}} \leq 2^{\underline{\alpha}}
  \abs{s}^{-\underline{\alpha}}.  $ These considerations together with
  \eqref{eq:kernel-ass} lead directly to \eqref{eq:A_n-large-s}.

  We then split $I_{2, 2} = I_{2, 2, 1} + I_{2, 2, 2}$ according to whether or not
  $x n^{1 / 2} \abs{s}^{-\underline{\alpha}} < 1$, which happens for
  $x < n^{\underline{\alpha} - 1 / 2}$, whenever $\abs{s} > n$. Then,
  by \eqref{eq:A_n-large-s},
  \begin{equation}
    \label{eq:I_221}
    \begin{aligned}
      I_{2, 2, 1} &\coloneqq C \int_{1}^{n^{\underline{\alpha} - 1 / 2}} x^{-1 - \beta} \biggl( \int_{[-n, n]\com}
                    A_n^j(x, s)^p \wedge A_n^k(x, s)^q \Di s \biggr) \Di x
      \\
                  &\leq C n^{q / 2} \int_{1}^{n^{\underline{\alpha} - 1 / 2}} x^{q - 1 - \beta}
                    \biggl( \int_{[-n, n]\com} \abs{s}^{-\underline{\alpha} q} \Di s \Di x \biggr)
      \\
                  &= C n^{1 + q/2 - \underline{\alpha} q} (n^{(\underline{\alpha} - 1 / 2) (q - \beta)} - 1)
                    \leq C n^{1 + \beta/2 - \underline{\alpha} \beta}.
    \end{aligned}
  \end{equation}
  Before we split the second term we note that
  $x^{1 / \underline{\alpha}} n^{1 / (2 \underline{\alpha})} > n$ if
  and only if $x > n^{\underline{\alpha} - 1/2}$ and therefore we may
  split as follows:
  \begin{align*}
    I_{2, 2, 2} &\coloneqq C \int_{n^{\underline{\alpha} - 1/2}}^{n^{\underline{\alpha}}} x^{-1 - \beta} \biggl(\int_{n \leq \abs{s} \leq x^{1/\underline{\alpha}} n^{1/(2 \underline{\alpha})}} A_n^j(x, s)^p \wedge A_n^k(x, s)^q \Di s \biggr) \Di x
    \\
                &\qquad+ C \int_{n^{\underline{\alpha} - 1/2}}^{n^{\underline{\alpha}}} x^{-1 - \beta} \biggl( \int_{\abs{s} > x^{1/\underline{\alpha}} n^{1/(2 \underline{\alpha})}} A_n^j(x, s)^p \wedge A_n^k(x, s)^q \Di s \biggr) \Di x.
  \end{align*}
  The first term is bounded according to
  \eqref{eq:A_n-large-s} by
  \begin{equation}
    \label{eq:I_2221}
    \begin{aligned}
      \MoveEqLeft \int_{n^{\underline{\alpha} - 1/2}}^{n^{\underline{\alpha}}} x^{-1 - \beta} \biggl(\int_{n \leq \abs{s} \leq x^{1/\underline{\alpha}} n^{1/(2 \underline{\alpha})}} A_n^j(x, s)^p \wedge A_n^k(x, s)^q \Di s \biggr) \Di x
      \\
      &\leq n^{q / 2} \int_{n^{\underline{\alpha} - 1/2}}^{n^{\underline{\alpha}}} x^{p - 1 - \beta}
        \biggl(\int_{n \leq \abs{s} \leq x^{1/\underline{\alpha}} n^{1/(2 \underline{\alpha})}} \abs{s}^{-\underline{\alpha} p} \Di s \biggr) \Di x
      \\
      &\leq C n^{1 + p / 2 - \underline{\alpha} p} \int_{n^{\underline{\alpha} - 1/2}}^{n^{\underline{\alpha}}} x^{p - 1 - \beta} \Di x
      \\
      &\leq C
        \begin{cases}
          n^{1 + \beta / 2 - \underline{\alpha} \beta} \log(n), & \text{if
            $p = \beta$,}
          \\
          n^{1 + p / 2 - \underline{\alpha} \beta} + n^{1 + \beta / 2 -
            \underline{\alpha} \beta}, & \text{if $p \neq \beta$,}
        \end{cases}
    \end{aligned}
  \end{equation}
  where we used that $1 - \underline{\alpha} p < 0$ in the second
  inequality. The second term is bounded by
  \begin{equation}
    \label{eq:I_2222}
    \begin{aligned}
      \MoveEqLeft \int_{n^{\underline{\alpha} - 1/2}}^{n^{\underline{\alpha}}} x^{-1 - \beta} \biggl( \int_{\abs{s} > x^{1/\underline{\alpha}} n^{1/(2 \underline{\alpha})}} A_n^j(x, s)^p \wedge A_n^k(x, s)^q \Di s \biggr) \Di x
      \\
      &\leq C n^{p / 2} \int_{n^{\underline{\alpha} - 1/2}}^{n^{\underline{\alpha}}} x^{p - 1 - \beta}
        \int_{x^{1 / \underline{\alpha}} n^{1 / (2 \underline{\alpha})}}^{\infty} s^{-\underline{\alpha} p} \Di s \Di x
      \\
      &= C n^{1/(2 \underline{\alpha})} \int_{n^{\underline{\alpha} - 1/2}}^{n^{\underline{\alpha}}} x^{1 / \underline{\alpha} - 1 - \beta} \Di x
        \leq C n^{1 + \beta / 2 - \underline{\alpha} \beta},
    \end{aligned}
  \end{equation}
  where we used that $\underline{\alpha} \beta > 2 > 1$. Now, we
  gather the observations for the $I_{2, 2}$ term. Namely, from
  \eqref{eq:I_221}--\eqref{eq:I_2222} it follows that
  \begin{equation}
    \label{eq:I_22}
    I_{2, 2} \leq
    \begin{cases}
      n^{1 + \beta / 2 - \underline{\alpha} \beta} \log(n), & \text{if
        $p = \beta$,}
      \\
      n^{1 + p / 2 - \underline{\alpha} \beta} + n^{1 + \beta / 2 -
        \underline{\alpha} \beta}, & \text{if $p \neq \beta$.}
    \end{cases}
  \end{equation}
  We observe now that
  $1 + \beta / 2 - \underline{\alpha} \beta < (2 - \underline{\alpha}
  \beta) / 2$ since $\underline{\alpha} > 1$. Moreover, since
  $p \leq 2 < \underline{\alpha} \beta$ we have that
  $1 + p / 2 - \underline{\alpha} \beta < (2 - \underline{\alpha}
  \beta) / 2$. Using these observations for \eqref{eq:I_22} we see
  together with \eqref{eq:I_21} that
  \begin{align}
    I_2 = I_{2, 1} + I_{2, 2}
   & \leq
    \begin{cases}
      n^{1 - q / 2}, & \text{if $\underline{\alpha} \beta > q$,}
      \\
      n^{1 - q / 2} \log(n), & \text{if $\underline{\alpha} \beta = q$,}
      \\
      n^{(2 - \underline{\alpha} \beta) / 2}, & \text{if
        $2 < \underline{\alpha} \beta < 3$.}
    \end{cases}
    +
    \begin{cases}
      n^{1 + \beta / 2 - \underline{\alpha} \beta} \log(n), & \text{if
        $p = \beta$,}
      \\
      n^{1 + p / 2 - \underline{\alpha} \beta} + n^{1 + \beta / 2 -
        \underline{\alpha} \beta}, & \text{if $p \neq \beta$.}
    \end{cases}
                                     \nonumber
    \\
        &\leq
    \begin{cases}
      n^{1 - q / 2}, & \text{if $\underline{\alpha} \beta > q$,}
      \\
      n^{1 - q / 2} \log(n), & \text{if
        $\underline{\alpha} \beta = q$,}
      \\
      n^{(2 - \underline{\alpha} \beta) / 2}, & \text{if
        $2 < \underline{\alpha} \beta < 3$.}
    \end{cases}
                                                    \label{eq:I_2}
      \end{align}

  \paragraph{Bounding $I_3$} Finally, we deal with
  \begin{equation*}
    I_3 = 2 \int_{n^{\underline{\alpha}}}^{\infty} \int_{\bbR} A_n^j(x, s)^p \wedge A_n^k(x, s)^q \Di s \isp \nu(\di x).
  \end{equation*}
  We consider two cases for $s \in \bbR$. The first case is
  $\abs{s} > x^{1 / \underline{\alpha}}$ and we recall that in this case
  \eqref{eq:A_n-large-s} holds. Note that
  $x n^{1 / 2} \abs{s}^{-\underline{\alpha}} > 1$ if and only if
  $\abs{s} < x^{1 / \underline{\alpha}} n^{1 / (2
    \underline{\alpha})}$. When this is the case we obtain that
  \begin{align*}
    \int_{x^{1/\underline{\alpha}} < \abs{s} < x^{1/\underline{\alpha}} n^{1 / (2 \underline{\alpha})}} \abs{s}^{-\underline{\alpha} p} \Di s
    &= 2 \int_{x^{1 / \underline{\alpha}}}^{x^{1/\underline{\alpha}} n^{1 / (2 \underline{\alpha})}} s^{-\underline{\alpha} p} \Di s
    \\
    &\leq C x^{(1 - \underline{\alpha} p)/\underline{\alpha}} (1 + n^{(1 - \underline{\alpha} p)/(2 \underline{\alpha})}).
  \end{align*}
  In the other case we have that
  \begin{align*}
    \int_{\abs{s} > x^{1 / \underline{\alpha}} n^{1 / (2 \underline{\alpha})}} \abs{s}^{-\underline{\alpha} q} \Di s
    &= 2 \int_{x^{1 / \underline{\alpha}} n^{1 / (2 \underline{\alpha})}}^{\infty} s^{-\underline{\alpha} q} \Di s
    \\
    &= C x^{(1 - \underline{\alpha} q) / \underline{\alpha}} n^{(1 - \underline{\alpha} q) / (2 \underline{\alpha})},
  \end{align*}
  where we used that $\underline{\alpha} q > 1$.

  From these two we can conclude that
  \begin{equation}
    \label{eq:I_31}
      \begin{aligned}
    \MoveEqLeft \int_{n^{\underline{\alpha}}}^{\infty} \int_{\abs{s} > x^{1/\underline{\alpha}}} A_n^j(x, s)^p \wedge A_n^k(x, s)^q \Di s \isp \nu(\di x)
    \\
    &\leq C \int_{n^{\underline{\alpha}}}^{\infty} x^{-1 - \beta} \Bigl(x^p n^{p / 2} \int_{x^{1/\underline{\alpha}} < \abs{s} < x^{1/\underline{\alpha}} n^{1 / (2 \underline{\alpha})}} \abs{s}^{-\underline{\alpha} p} \Di s
    \\
    &\qquad + x^q n^{q / 2} \int_{\abs{s} > x^{1/\underline{\alpha}} n^{1 / (2 \underline{\alpha})}} \abs{s}^{- \underline{\alpha} q} \Di s\Bigr) \Di x
    \\
    &\leq C (n^{p / 2} + n^{p /2 + (1 - \underline{\alpha} p)/(2 \underline{\alpha})}) \int_{n^{\underline{\alpha}}}^{\infty} x^{-1 - \beta + p + (1 - \underline{\alpha} p)/\underline{\alpha}} \Di x
    \\
    &\qquad + C n^{q /2 + (1 - \underline{\alpha} q)/(2 \underline{\alpha})} \int_{n^{\underline{\alpha}}}^{\infty} x^{-1 - \beta + q + (1 - \underline{\alpha} q) / \underline{\alpha}} \Di x
    \\
    &= C \bigl(n^{1 + p / 2 - \underline{\alpha} \beta}
      + n^{1 + p / 2 + (1 - \underline{\alpha} p)/(2 \underline{\alpha}) - \underline{\alpha} \beta}
      + n^{1 + q / 2 + (1 - \underline{\alpha} q)/(2 \underline{\alpha}) - \underline{\alpha} \beta}
      \bigr)
    \\
    &\leq C (n^{1 + p / 2 - \underline{\alpha} \beta} + n^{1 + / (2 \underline{\alpha}) - \underline{\alpha} \beta}).
  \end{aligned}
  \end{equation}
  We consider now the second and last case
  $0 \leq \abs{s} \leq x^{1/\underline{\alpha}}$. Here, the
  trivial bound
  \begin{equation*}
    A_n^\ell(x, s) = n^{-1 / 2} \sum_{i = 1}^{n} 1 \wedge \abs{x g_\ell(i - s)}
    \leq n^{1 / 2}
  \end{equation*}
  will be sufficient. Indeed, by
  assumption~\eqref{eq:stable-levy-measure} we have that
  \begin{equation}
    \label{eq:I_32}
      \begin{aligned}
    \int_{n^{\underline{\alpha}}}^{\infty}
    \int_{\abs{s} \leq x^{1/\underline{\alpha}}} A_n^\ell(x, s)^p \Di s \Di x
    &\leq C n^{p / 2} \int_{n^{\underline{\alpha}}}^{\infty} x^{-1 - \beta} \biggl(
      \int_{0}^{x^{1 / \underline{\alpha}}} 1 \Di s \biggr) \Di x
    \\
    &= C n^{1 + p / 2 - \underline{\alpha} \beta},
  \end{aligned}
  \end{equation}
  where we used that $\underline{\alpha} \beta > 1$. Summarizing the
  inequalities \eqref{eq:I_31} and \eqref{eq:I_32} yields
  \begin{equation}
    \label{eq:I_3}
    I_3 \leq C (n^{1 + p / 2 - \underline{\alpha} \beta} + n^{1 + 1 / (2 \underline{\alpha}) - \underline{\alpha} \beta})
    \leq C n^{(2 - \underline{\alpha} \beta) / 2},
  \end{equation}
  where we used that
  $1 + 1 / (2 \underline{\alpha}) - \underline{\alpha} \beta < (2 -
  \underline{\alpha} \beta) / 2$ for the second term and the same
  considerations as in \eqref{eq:I_22} for the first term.

  Combining \eqref{eq:I_1}, \eqref{eq:I_2} and~\eqref{eq:I_3} we
  finally conclude \eqref{eq:A_n-ineq}.\qed
\end{proof}

\begin{proofof}
  According to Theorem~\ref{thm:wasserstein-dist} we have that for any
  $n\in\bbN$,
  \begin{equation*}
    d_3(V_n(X; f), Y)
    \leq \sum_{i, k = 1}^{d} (\gamma_{1}(F_i,F_k) + \gamma_{2}(F_i,F_k))
    + \gamma_3.
  \end{equation*}
  Using now Lemmas~\ref{lem:gamma1}, \ref{lem:gamma2}
  and~\ref{lem:gamma3} we see that
  \begin{align*}
    d_3(V_n(X; f), Y)
    &\leq C d^2 (m^3 n^{-1/2} + m^4 n^{-1/2}) + C d^4 m^4
      \begin{cases}
        n^{-1/2}, & \text{if $\underline{\alpha} \beta > 3$,}
        \\
        n^{-1/2} \log(n), & \text{if $\underline{\alpha} \beta = 3$,}
        \\
        n^{(2 - \underline{\alpha} \beta) / 2}, & \text{if
          $2 < \underline{\alpha} \beta < 3$,}
      \end{cases}
    \\
    &\leq C d^4 m^4
      \begin{cases}
        n^{-1/2}, & \text{if $\underline{\alpha} \beta > 3$,}
        \\
        n^{-1/2} \log(n), & \text{if $\underline{\alpha} \beta = 3$,}
        \\
        n^{(2 - \underline{\alpha} \beta) / 2}, & \text{if
          $2 < \underline{\alpha} \beta < 3$.}
      \end{cases}
  \end{align*}
  This completes the argument.\qed
\end{proofof}

\subsection*{Acknowledgement}

\noindent We would like to thank the two anonymous referees and the
associated editor for stimulating questions and remarks, which helped
us to significantly improve our paper.

\bibliography{References}

\end{document}